\newtheorem{theorem}{Theorem}[section]
\newtheorem{lemma}[theorem]{Lemma}
\newtheorem{corollary}[theorem]{Corollary}
\newtheorem{proposition}[theorem]{Proposition}
\newtheorem{fact}[theorem]{Fact}
\theoremstyle{definition}
\newtheorem{definition}[theorem]{Definition}
\newtheorem{remark}[theorem]{Remark}
\newtheorem{question}[theorem]{Question}
\def\e{\varepsilon}
\renewcommand{\emptyset}{\varnothing}
\renewcommand{\epsilon}{\varepsilon}
\def\M{\mathbb M}
\def\R{\mathbb R}
\def\acl{\operatorname{acl}}
\def\dcl{\operatorname{dcl}}
\def\bdd{\operatorname{bdd}}
\def\tp{\operatorname{tp}}
\def\Aut{\operatorname{Aut}}
\newcommand{\abar}{\bar{a}}
\newcommand{\bbar}{\bar{b}}
\newcommand{\ybar}{\bar{y}}
\newcommand{\cL}{\mathcal{L}}
\newcommand{\seq}{\subseteq}
\newcommand{\nv}{\text{-}}
\newcommand{\inv}{^{\nv 1}}
\newcommand{\miff}{\makebox[.4in]{$\Leftrightarrow$}}
\newcommand{\mimp}{\makebox[.4in]{$\Rightarrow$}}
\newcommand{\heq}{\textnormal{heq}}
\newcommand{\eq}{\textnormal{eq}}
\def\Ind{\setbox0=\hbox{$x$}\kern\wd0\hbox to 0pt{\hss$\mid$\hss}
\lower.9\ht0\hbox to 0pt{\hss$\smile$\hss}\kern\wd0}
\def\Notind{\setbox0=\hbox{$x$}\kern\wd0\hbox to 0pt{\mathchardef
\nn=12854\hss$\nn$\kern1.4\wd0\hss}\hbox to
0pt{\hss$\mid$\hss}\lower.9\ht0 \hbox to 0pt{\hss$\smile$\hss}\kern\wd0}
\def\ind{\mathop{\mathpalette\Ind{}}}
\def\nind{\mathop{\mathpalette\Notind{}}}
\newcommand{\inda}{\ind^{\!\!\textnormal{a}}}
\newcommand{\ninda}{\nind^{\!\!\textnormal{a}}}
\newcommand{\indd}{\ind^{\!\!\textnormal{d}}}
\newcommand{\nindd}{\nind^{\!\!\textnormal{d}}}
\newcommand{\indb}{\ind^{\!\!\textnormal{b}}}
\def\baseunderline#1{\def\stacktype{L}\def\stackalignment{l}%
  \ThisStyle{\stackon[0pt]{$\SavedStyle#1$}{\let\mathit\relax%
  $\SavedStyle\underline{\phantom{\mathrm{#1}}}$}}}
   \def\MR#1{}
\title[Separation for isometry groups and hyperimaginaries]{Separation for isometric group actions and hyperimaginary independence}
\author[G. Conant]{Gabriel Conant}
\address{Department of Mathematics\\
The Ohio State University\\
Columbus, OH, 43210, USA}
\email{conant.38@osu.edu}
\author[J. Hanson]{James Hanson}
\address{Department of Mathematics\\
University of Maryland\\
College Park, MD 20742, USA}
\email{jhanson9@umd.edu}
\date{February 14, 2022}
\begin{document}

\begin{abstract}
We  generalize P. M. Neumann's Lemma to the setting of isometric actions on metric spaces and use it to prove several results in continuous model theory related to algebraic independence. In particular, we show that algebraic independence satisfies the full existence axiom (which answers a question of Goldbring) and is implied by dividing independence. We also use the relationship between hyperimaginaries and continuous imaginaries  to derive further results that are new even for discrete theories. Specifically, we show that if $\M$ is a monster model of  a discrete or continuous theory, then bounded-closure independence in $\M^{\heq}$ satisfies full existence (which answers a question of Adler) and is implied by dividing independence. 
\end{abstract}

\subjclass[2020]{Primary 03C66; Secondary 20B99}
\keywords{algebraic independence, continuous logic, isometric group actions}

\maketitle

\section{Introduction}

Let $T$ be a complete first-order theory in either discrete or continuous logic, and fix a sufficiently saturated and strongly homogeneous monster model $\M$. The ternary relation of \textbf{algebraic independence} on small subsets of $\M$ is defined by
\[
\textstyle A\inda_C B\miff \acl(AC)\cap\acl(BC)=\acl(C),
\]
where $\acl$ denotes model-theoretic algebraic closure.

In his influential paper, Adler \cite{Adgeo} develops a general theory of invariant ternary relations as mathematical objects in their own right. He also formulates a list of axioms for  \emph{strict independence relations} (see \cite[Section 1]{Adgeo}), and shows that such relations are useful signals for when a theory exhibits a reasonable `notion of independence'. For example, if $T$ is simple then forking independence is a strict independence relation. Part of the relevance of $\inda$ in this study comes from the idea that any useful notion of independence should, at the very least, imply $\inda$.

When $T$ is discrete, $\inda$ satisfies all of the axioms of a strict independence relation except for possibly base monotonicity. The remaining axioms are rather straightforward to check, with the exception of \emph{full existence}, which requires some work to prove directly (see Section \ref{sec:discrete} for details). By definition, an invariant ternary relation $\ind$ on small subsets of $\M$ satisfies \textbf{full existence} if for any $A,B,C\subset\M$ there is some conjugate   $A'\equiv_C A$ such that $A'\ind_C B$. A useful feature of this axiom is that it allows for the construction of lefthand $\ind$-Morley sequences, i.e., $C$-indiscernible sequences $(b_i)_{i<\omega}$ such that $b_i\ind_C b_{<i}$ for all $i<\omega$.

The axioms that are easy to prove for $\inda$ in the discrete setting remain so in the continuous setting, modulo some unsurprising modifications (e.g., \emph{finite character} becomes \emph{countable character}; see \cite[Proposition 2.3.1]{AGK}). 
On the other hand, Adler's discrete proof of full existence for $\inda$ in \cite{Adgeo} does not admit a clear generalization to the continuous setting.  This motivates the following question, which was posed by Goldbring in a 2012 graduate course attended by the first author, and also appears as Question 2.12 in a preprint \cite{AGK2}  by Andrews, Goldbring, and Keisler.

\begin{question}[Goldbring]\label{ques1}
Does $\inda$ satisfy full existence when $T$ is continuous?
\end{question}

Various authors have obtained a positive answer  in certain special cases, although all in unpublished work. We summarize these results at the start of Section \ref{sec:FE}.  

It turns out that a closely related question about hyperimaginaries (in discrete logic) was  asked by Adler in his thesis.  In particular, Adler defines the relation $\indb$ on small subsets of $\M^{\heq}$ so that $A\indb_C B$ if and only if $\bdd(AC)\cap\bdd(BC)=\bdd(C)$. Toward understanding the theory of thorn-forking for hyperimaginaries, he then asks the following question (see \cite[Question A.8]{Adthesis}).

\begin{question}[Adler]\label{ques2}
Does $\indb$ satisfy full existence for small subsets of $\M^{\heq}$?\footnote{To prevent possible confusion, we note that Adler's thesis uses  `existence' instead of `full existence'. The change in terminology to `full existence' occurs in  \cite{Adgeo} which, like many sources, uses `existence' for a much weaker axiom (see Fact \ref{fact:easy}).}
\end{question}

The definition of a hyperimaginary works equally well when $T$ is continuous, and so the previous question can also be asked in that setting. With this perspective, the two questions become essentially equivalent due to the nature of hyperimaginaries in continuous logic (see Section \ref{sec:hyp} for details). 

In this paper, we provide positive answers to the previous questions and prove some further results on the relationship between $\inda$, $\indb$, and dividing independence, denoted $\indd$. In particular, we prove that $\indd$ implies $\inda$ in continuous logic (correcting an erroneous proof from unpublished notes of the first author and  Terry \cite{CoTeCL}). We then use  the continuous perspective on hyperimaginaries  to conclude that $\indd$ implies $\indb$, which appears to be a new result even for discrete theories. Underlying all of these arguments is one key tool of independent interest, namely, a metric space analogue of P. M. Neumann's Lemma, which we prove in Section \ref{sec:PMN}.

\section{The discrete case}\label{sec:discrete}

We first recall a classical result on permutation groups.

\begin{lemma}[P. M. Neumann \cite{NeuPM}]\label{lem:PMN}
Let $X$ be a set and suppose $G$ is a group of permutations of $X$. Let $P,Q\seq X$ be finite subsets such that no point in $P$ has a finite orbit. Then there is some $g\in G$ such that $gP\cap Q=\emptyset$.
\end{lemma}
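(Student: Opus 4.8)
The plan is to translate the conclusion into a statement about covering $G$ by cosets and then invoke B.\,H.\,Neumann's covering lemma. First I would reformulate: an element $g\in G$ satisfies $gP\cap Q=\emptyset$ precisely when $g$ avoids the ``bad set''
\[
B=\bigcup_{p\in P}\ \bigcup_{q\in Q} B_{p,q},\qquad B_{p,q}:=\{g\in G:gp=q\}.
\]
Each $B_{p,q}$ is either empty or a single left coset of the point stabilizer $G_p=\{g\in G:gp=p\}$, since if $g_0p=q$ then $gp=q\iff g_0^{-1}g\in G_p\iff g\in g_0G_p$. By the orbit--stabilizer correspondence $[G:G_p]$ equals the cardinality of the orbit $Gp$, which is infinite for every $p\in P$ by hypothesis. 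Hence $B$ is contained in a union of at most $|P|\cdot|Q|$ left cosets of subgroups of $G$, each of infinite index. So it suffices to show that $G$ is never the union of finitely many cosets of infinite-index subgroups; then $G\setminus B\neq\emptyset$, and any $g\in G\setminus B$ works.

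That covering statement is B.\,H.\,Neumann's lemma: if $G=\bigcup_{i=1}^{m}a_iH_i$ for subgroups $H_1,\dots,H_m\le G$, then some $H_i$ has finite index in $G$. I would prove it by induction on the number $r$ of \emph{distinct} subgroups among the $H_i$. If $r=1$, then $G$ is covered by $m$ cosets of one subgroup $H$, so $[G:H]\le m<\infty$. For $r\ge 2$, suppose toward a contradiction that every $H_i$ has infinite index. Fix one of these subgroups $H$ and set $S=\{i:H_i=H\}$, $T=\{i:H_i\neq H\}$. The cosets $\{a_iH:i\in S\}$ are finitely many cosets of an infinite-index subgroup, so they do not cover $G$; pick a coset $cH$ disjoint from all of them, whence $cH\subseteq\bigcup_{i\in T}a_iH_i$. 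Left-translating by $a_jc^{-1}$ for each $j\in S$ gives $a_jH=(a_jc^{-1})(cH)\subseteq\bigcup_{i\in T}(a_jc^{-1}a_i)H_i$, so each of the finitely many offending cosets $a_jH$ is covered by finitely many cosets of subgroups in $\{H_i:i\in T\}$. Therefore
\[
G=\bigcup_{i\in T}a_iH_i\ \cup\ \bigcup_{j\in S}a_jH\ \subseteq\ \bigcup_{i\in T}a_iH_i\ \cup\ \bigcup_{j\in S}\bigcup_{i\in T}(a_jc^{-1}a_i)H_i,
\]
a \emph{finite} union of cosets of subgroups involving at most $r-1$ distinct subgroups. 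By the induction hypothesis one of them has finite index, contradicting our assumption. This proves B.\,H.\,Neumann's lemma, and with it the reduction, finishing the proof.

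The only real work is the inductive step of B.\,H.\,Neumann's lemma, and the point that needs care is the bookkeeping: one must translate not just a single missing coset of $H$ but all the offending cosets $a_jH$ ($j\in S$), so that what remains is genuinely a finite union of cosets of the surviving $\le r-1$ subgroups and the induction can proceed. (An essentially equivalent route is a direct induction on $|P|$, but its inductive step reduces to the same covering fact, so it does not really avoid this argument.) Everything else --- the orbit--stabilizer count and the reformulation via bad cosets --- is routine.
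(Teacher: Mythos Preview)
Your argument is correct: the reformulation via the ``bad sets'' $B_{p,q}$, the identification of each nonempty $B_{p,q}$ as a left coset of the infinite-index stabilizer $G_p$, and the inductive proof of B.\,H.\,Neumann's covering lemma are all sound. The paper does not give its own proof of this lemma; it merely cites \cite{NeuPM} and remarks that the result was first obtained exactly as you do, as a corollary of B.\,H.\,Neumann's Lemma. So your route matches what the paper describes as the original derivation.

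One small correction to your closing aside: the direct induction on $|P|$ from \cite{BBMN} does \emph{not} reduce to the covering fact. The paper adapts precisely that argument to prove its metric generalization (Lemma~\ref{lem:PMc1}), and the inductive step there is a self-contained combinatorial manoeuvre: one removes a point $p$ from $P$, enlarges $Q$ judiciously, applies the inductive hypothesis, and then corrects with a carefully chosen translation. No appeal to index or covering is made. So the two routes are genuinely different, and the $|P|$-induction is what buys the metric version---your coset argument, by contrast, has no obvious metric analogue since stabilizers of points in a metric space need not control approximate stabilizers.
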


This lemma was first proved as a corollary of an earlier result called \emph{B. H. Neumman's Lemma} \cite{NeuBH}, which says that if an arbitrary group $G$ is covered by finitely many cosets of (possibly different) subgroups, then at least one of the subgroups involved has finite index. In fact, each  of the lemmas Neumann can be derived as an easy consequence of the other (see \cite{BBMN}).  P. M. Neumann's Lemma has found use in (discrete) model theory  when $X$ is (the universe of) a first-order structure and $G$ is a group of automorphisms  (see also \cite[Corollary 4.2.2]{Hodges}).

As a warm-up for later results in continuous logic, we recall two applications of P. M. Neumann's Lemma in discrete logic having to do with the ternary relations discussed above. The first is a rather short proof that $\inda$ satisfies the full existence axiom. This differs from Adler's proof in \cite[Proposition 1.5]{Adgeo}, which is entirely self-contained and given as a ``hard'' exercise in his thesis  (see \cite[Exercise 1.7$(i)$]{Adthesis}). In retrospect however, the argument in \cite{Adgeo} involves similar combinatorics as Lemma \ref{lem:PMN}. Moreover, as noted in \cite{KrMSE2}, one can  deduce full existence for $\inda$ from \cite[Theorem 6.4.5]{Hodges}, and the connection to Lemma \ref{lem:PMN} is made evident in \cite[Exercise 6.4.6]{Hodges}. Thus the proof we give below is certainly well known. Indeed, a MathSciNet search of papers that cite \cite{NeuPM} reveals similar arguments in \cite{BloMP,HruTJM}, for example.

\begin{proposition}
Let $T$ be a complete discrete theory with monster model $\M$. Then $\inda$ satisfies full existence, i.e., for any small $A,B,C\subset\M$, there is some $A'\equiv_C A$ such that $A'\inda_C B$.
\end{proposition}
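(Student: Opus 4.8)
The plan is to deduce full existence for $\inda$ directly from P. M. Neumann's Lemma (Lemma \ref{lem:PMN}), applied with $X = \M$ and $G = \Aut(\M/C)$. First I would reduce to the case where $A$ is finite. Indeed, by countable/finite character and a standard compactness-and-union argument, it suffices to handle finite tuples: if every finite subtuple of $A$ can be moved off $B$ while fixing its type over $C$, then a suitable limit of such moves (extracted by saturation/compactness from the partial types asserting $\sigma$ fixes $\tp(\abar_0/C)$ and $\sigma\abar_0 \cap \acl(BC) \seq \acl(C)$ for all finite $\abar_0$) yields a single $A' \equiv_C A$ with $A' \inda_C B$. So fix a finite tuple $\abar$ enumerating (a finite subset of) $A$.

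Next I would set up the permutation group. Work over $C$, i.e., replace $\M$ by the structure with constants for $C$ and let $G = \Aut(\M/C)$. The key observation is that for $a \in \M$, the $G$-orbit of $a$ is finite if and only if $a \in \acl(C)$. Now I want to find $\sigma \in G$ with $\sigma(\acl(A\abar C)) \cap \acl(BC) \seq \acl(C)$, which will give $\sigma A \inda_C B$ after noting $\sigma A \equiv_C A$ and $\acl(\sigma A C) = \sigma(\acl(AC))$. The obstacle is that $\acl(\abar C)$ and $\acl(BC)$ are generally infinite, so Lemma \ref{lem:PMN} does not apply verbatim — it requires $P, Q$ finite. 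To get around this, I would separate the algebraic and non-algebraic parts: let $P$ be the set of realizations of the finitely many non-algebraic types over $C$ that appear among the coordinates of an enumeration... more precisely, let $\abar = \abar_0 \abar_1$ where $\abar_0$ lists the coordinates of $\abar$ lying in $\acl(C)$ and $\abar_1$ the rest, so no coordinate of $\abar_1$ has a finite $G$-orbit. The coordinates of $\abar_0$ are fixed setwise by $G$ anyway, so they contribute only $\acl(C)$ to the intersection. Apply Lemma \ref{lem:PMN} with $P = $ (the set of coordinates of $\abar_1$) and $Q = $ (the coordinates of a finite subtuple $\bbar_0$ of an enumeration of $B$): there is $\sigma \in G$ with $\sigma\abar_1 \cap \bbar_0 = \emptyset$.

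The remaining issue is that $\sigma\abar_1 \cap \bbar_0 = \emptyset$ is much weaker than $\acl(\sigma\abar C) \cap \acl(BC) = \acl(C)$; disjointness of tuples is not disjointness of algebraic closures. The standard fix — and I expect this to be the main technical point — is to apply Neumann's Lemma not to $\abar_1$ and $\bbar_0$ directly but to (finite subsets of) their algebraic closures, handling one algebraic type at a time and iterating, and then to assemble the result via compactness. Concretely: for each pair $(p, q)$ where $p$ is the type over $C$ of a tuple from $\acl(\abar_1 C) \setminus \acl(C)$ and $q$ ranges over finitely many types over $BC$, one arranges, using Lemma \ref{lem:PMN} with $P$ a finite set of realizations of $p$ and $Q$ a finite set of realizations of $q$, that an automorphism moves the realizations of $p$ off those of $q$; since any element of $\acl(\abar_1 C) \setminus \acl(C)$ has only finitely many conjugates over $C$ but infinitely many (indeed, no element outside $\acl(C)$ is fixed, but one must check the orbit is genuinely infinite, which holds precisely because it is not algebraic), the hypothesis of Lemma \ref{lem:PMN} is met. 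Then the partial type over $BC$ in the variables of $\sigma$ asserting (i) $\sigma$ fixes $\tp(\abar/C)$ and (ii) for each relevant algebraic type, the $\sigma$-image of $\acl(\abar C)$ avoids $\acl(BC)$ except in $\acl(C)$, is finitely satisfiable by the above, hence realized by an honest automorphism $\sigma \in \Aut(\M/C)$ (using saturation/strong homogeneity). Taking $A' = \sigma A$ finishes the finite case, and the reduction above lifts it to arbitrary small $A$.
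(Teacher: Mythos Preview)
Your core approach is correct and is the same as the paper's: reduce via compactness to finite sets and apply P.~M.~Neumann's Lemma with $G=\Aut(\M/C)$. But you have overcomplicated the execution. The paper enumerates $\acl(AC)$ (not just $A$) by a tuple $(a_i)_{i<\lambda}$, and writes down the type $p(x)=\tp((a_i)/C)$ together with the conditions $x_i\neq b$ for each $b\in B^\dagger\coloneqq\acl(BC)\setminus\acl(C)$; a finite fragment involves a finite $A_0\seq\acl(AC)$ and a finite $B_0\seq B^\dagger$, and Neumann's Lemma applied with $P=B_0$ (every point of which has infinite orbit) and $Q=A_0$ gives $\sigma\in\Aut(\M/C)$ with $\sigma(B_0)\cap A_0=\emptyset$, so $\sigma^{-1}(A_0)$ realizes the fragment. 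This avoids your preliminary reduction to finite $A$, your split $\bar a=\bar a_0\bar a_1$, your false start on the raw tuples, and the vague ``partial type in the variables of $\sigma$'' (compactness is applied to elements, not automorphisms). Also, your line ``any element of $\acl(\bar a_1 C)\setminus\acl(C)$ has only finitely many conjugates over $C$'' is a slip---such elements have infinitely many, as your own parenthetical notes.
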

\begin{proof}
Fix $A$, $B$, and $C$. Let $B^\dagger=\acl(BC)\backslash\acl(C)$. We want to find $A'\equiv_C A$ such that $\acl(A'C)\cap B^\dagger=\emptyset$. By compactness, it suffices to show that for all finite $A_0\seq\acl(AC)$ and finite $B_0\seq B^\dagger$, there is $A'_0\equiv_C A_0$ such that $A'_0\cap B_0=\emptyset$. So fix such $A_0$ and $B_0$. Then $B_0\cap \acl(C)=\emptyset$, and so no point in $B_0$ has a finite $\Aut(\M/C)$-orbit. By Lemma \ref{lem:PMN}, there is $\sigma\in\Aut(\M/C)$ such that $\sigma(B_0)$ is disjoint from $A_0$. So we can take $A'_0=\sigma\inv(A_0)$. 
\end{proof}

The second application of P. M. Neumann's Lemma is that $\indd$ implies $\inda$. This result is also well-known, but many sources factor the proof through Remark 5.4(3) in \cite{Adgeo}, which was recently discovered to be false \cite{CoKr2}. So we take the opportunity here to spell out the proof from Lemma \ref{lem:PMN}.\footnote{It was pointed out to us later by Alex Kruckman that a similar argument is given in \cite{KrMSE1}.}

\begin{proposition}\label{prop:dadisc}
Let $T$ be a complete discrete theory with monster model $\M$. For any small $A,B,C\subset\M$, if $A\indd_C B$ then $A\inda_C B$.
\end{proposition}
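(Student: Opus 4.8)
The plan is to prove the contrapositive: assuming $A \ninda_C B$, I want to produce a formula witnessing that $A \nindd_C B$. So suppose $\acl(AC) \cap \acl(BC) \supsetneq \acl(C)$, and fix some $e \in (\acl(AC) \cap \acl(BC)) \setminus \acl(C)$. Since $e \in \acl(AC)$, there is a finite tuple $\abar$ from $A$ and a formula $\varphi(x, \abar, c)$ (with $c$ a finite tuple from $C$) isolating a finite set of realizations containing $e$; similarly $e \in \acl(BC)$ is witnessed by some $\psi(x, \bbar, c)$ with finitely many realizations. The goal is to show that $\psi(x,\bbar,c)$ (or something derived from it) divides over $C$, contradicting $A \indd_C B$.

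The natural route: let $d = \tp(\bbar / C)$ and try to find a $C$-indiscernible sequence $(\bbar_i)_{i<\omega}$ with $\bbar_0 = \bbar$ such that $\{\psi(x, \bbar_i, c) : i < \omega\}$ is inconsistent (in fact $k$-inconsistent for some $k$). To get such a sequence I would start with a $C$-indiscernible sequence $(\bbar_i)$ in $\tp(\bbar/C)$ extracted via Ramsey/compactness, arranged so that the sequence is moreover indiscernible over $Ce$ — wait, that's not quite right since $e$ might not sit well. The cleaner approach is the classical one and this is where Lemma~\ref{lem:PMN} enters: work inside $\M$ with the group $G = \Aut(\M / C)$ acting on the finite set $Q$ of realizations of $\psi(x,\bbar,c)$, and use the fact that $e$ (a realization lying in $Q$) has finite $G$-orbit because $e \in \acl(C)$... no — $e \notin \acl(C)$, so $e$ has infinite orbit over $C$. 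This is the key leverage.

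Here is the step I actually expect to carry out. Since $e \in \acl(AC) \setminus \acl(C)$, the $\Aut(\M/C)$-orbit of $e$ is infinite; choose infinitely many conjugates $e_0 = e, e_1, e_2, \dots$ (pairwise distinct) realizing $\tp(e/C)$, and correspondingly pull back to get $\bbar_i \equiv_C \bbar$ with $e_i$ a realization of $\psi(x, \bbar_i, c)$. After extracting a $C$-indiscernible subsequence (preserving that the $e_i$ are pairwise distinct, which survives extraction since distinctness is type-definable over $C$ in the relevant variables), I get a $C$-indiscernible $(\bbar_i)_{i<\omega}$ with $\bbar_0 \equiv_C \bbar$ and pairwise distinct witnesses $e_i \models \psi(x,\bbar_i,c)$. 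Now if $\{\psi(x, \bbar_i, c) : i < \omega\}$ were consistent, any realization $x^*$ would lie in all the finite sets $\psi(\M, \bbar_i, c)$; but I need to derive that these finite sets are "almost disjoint" to get a contradiction with consistency. This is exactly where P. M. Neumann's Lemma (Lemma~\ref{lem:PMN}) is the right tool: applied with $X = \psi(\M,\bbar,c)$-type realizations, $G = \Aut(\M/C\bbar)$ or a suitable stabilizer, one shows that for the finitely-many-at-a-time intersections one can move things to be disjoint. Concretely, I'd argue: fix $n = |\psi(\M,\bbar,c)|$; I claim $\{\psi(x,\bbar_i,c)\}$ is $(n+1)$-inconsistent, because if $x^*$ satisfied $n+1$ of them, say for $i_0 < \dots < i_n$, then $x^* \in \bigcap_j \psi(\M,\bbar_{i_j},c)$ — and using indiscernibility plus Neumann's Lemma applied to the action of $\mathrm{Aut}(\M/C)$ on these configurations (the $e_{i_j}$ being distinct realizations none in $\acl(C)$) forces one of the $\bbar_{i_j}$-fibers to be moved off, contradiction. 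So $\psi(x,\bbar,c)$ divides over $C$ via $(\bbar_i)$, hence $A \nindd_C B$.

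The main obstacle I anticipate is the bookkeeping in the last step: correctly identifying which group action to feed into Lemma~\ref{lem:PMN} so that the "no finite orbit" hypothesis genuinely holds, and extracting the right uniform bound on inconsistency from indiscernibility. The subtlety is that $e$ has infinite orbit over $C$ but possibly finite orbit over $C\bbar$ (indeed $e \in \acl(BC)$!), so one must be careful to apply Neumann's Lemma with the base $C$ (not $C\bbar$) and account for the fact that moving $\bbar$ moves $e$ along with it. The correct formulation is probably to run the disjointness argument on the conjugates directly — this is the combinatorial heart and mirrors the proof of the preceding Proposition, so I would model it closely on that argument.
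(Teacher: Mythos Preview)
Your overall shape is right --- contrapositive, pick $e \in (\acl(AC)\cap\acl(BC))\setminus\acl(C)$, and exhibit dividing of $\tp(e/BC)$ via P.~M.~Neumann's Lemma --- and this matches the paper's strategy. But there is a real gap in your choice of $\psi$ and your inconsistency argument.

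You take $\psi(x,\bbar,c)$ to be \emph{any} algebraic formula over $BC$ satisfied by $e$, and then track only a single distinguished realization $e_i$ in each conjugate solution set. This is not enough: the set $\psi(\M,\bbar,c)$ may contain points of $\acl(C)$. For a concrete failure, suppose $\psi(\M,\bbar,c)=\{e,c_0\}$ with $c_0\in C$. Then for every $C$-conjugate $\bbar_i$ of $\bbar$ we still have $c_0\models\psi(x,\bbar_i,c)$, so $\{\psi(x,\bbar_i,c):i<\omega\}$ is never $k$-inconsistent for any $k$, no matter how you arrange the $e_i$'s or what indiscernibility you extract. Your proposed $(n+1)$-inconsistency argument (``if $x^*$ satisfied $n{+}1$ of them \ldots\ Neumann forces a fiber to be moved off'') does not go through; distinctness of the $e_i$'s alone says nothing about the rest of each solution set.

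The fix, and what the paper does, is to choose the formula so that its solution set is \emph{exactly} the $\Aut(\M/BC)$-orbit of $e$ --- i.e., take $\varphi(x,\bbar)$ isolating $\tp(e/BC)$ and let $\abar$ enumerate that orbit. The key point is then that \emph{every} coordinate of $\abar$ is a $BC$-conjugate of $e$ and hence lies outside $\acl(C)$, so every coordinate has infinite $\Aut(\M/C)$-orbit. Now Neumann's Lemma applies to the whole tuple $\abar$ (not just to $e$), and repeated application produces $C$-conjugates $\abar_i$ of $\abar$ that are pairwise \emph{disjoint as sets}. Pulling back gives $\bbar_i\equiv_C\bbar$ with $\varphi(\M,\bbar_i)=\abar_i$, so $\{\varphi(x,\bbar_i):i<\omega\}$ is $2$-inconsistent. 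No indiscernibility extraction or pigeonhole bound is needed; the disjointness is manufactured directly. Then $e\nindd_C B$, and $A\nindd_C B$ follows from monotonicity of dividing under algebraic closure on the left (the paper cites Fact~\ref{fact:kdiv}).
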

\begin{proof}
Suppose $A\ninda_C B$, and fix $a\in (\acl(AC)\cap\acl(BC))\backslash \acl(C)$. Let $\abar$ be a finite tuple enumerating the $\Aut(\M/BC)$-orbit of $a$, and choose an $\cL_{BC}$-formula $\varphi(x,\bbar)$ defining $\abar$ (so $\varphi(x,\bbar)$ isolates $\tp(a/BC)$). Since $a\not\in\acl(C)$,  every coordinate of $\abar$ has an infinite $\Aut(\M/C)$-orbit. By repeated application of Lemma \ref{lem:PMN}, we can obtain  an infinite sequence $(\abar_i)_{i<\omega}$ of pairwise disjoint tuples such that $\abar_i\equiv_C \abar$ for all $i<\omega$. For each $i<\omega$, choose $\bbar_i$ such that $\abar_i\bbar_i\equiv_C \abar\bbar$. Note that $\abar_i$ enumerates the solutions of $\phi(x,\bbar_i)$. Therefore  $\{\phi(x,\bbar_i):i<\omega\}$ is $2$-inconsistent. Since $\phi(x,\bbar)\in \tp(a/BC)$, we have $a\nindd_C B$. So $A\nindd_C B$ by basic axioms of dividing (see Fact \ref{fact:kdiv} below).
\end{proof}

\section{P. M. Neumann's Lemma for metric spaces}\label{sec:PMN}

 Let $X$ be a metric space. 
 Given $x\in X$ and $\epsilon>0$, let $B_\epsilon(x)$ denote the open ball of radius $\epsilon$ centered at $x$. Given $P,Q\seq X$, we say that $Q$ is an \emph{$\epsilon$-net for $P$} if $P=\bigcup_{x\in Q}B_\epsilon(x)$. A subset $P\seq X$ is \emph{$\epsilon$-bounded} if it has a finite $\epsilon$-net in $X$, and \emph{totally bounded} if it is $\epsilon$-bounded for all $\epsilon>0$. Given $P,Q\seq X$, let $d(P,Q)=\inf\{d(x,y):x\in P,~y\in Q\}$. 
 
Our first form of P. M. Neumann's Lemma has to do with separating finite sets at uniformly chosen distances.

 \begin{lemma}\label{lem:PMc1}
Let $X$ be a metric space and suppose $G$ is a group of isometries of $X$. Let $P,Q\seq X$ be finite sets. Suppose that for all $p\in P$ there is some $\epsilon_p>0$ such that the orbit of $p$ is not $\epsilon_p$-bounded. Then there is some $g\in G$ such that $d(gp,Q)\geq\frac{1}{3}\epsilon_p$ for all $p\in P$.
\end{lemma}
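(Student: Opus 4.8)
The plan is to mimic the classical proof of P.\ M.\ Neumann's Lemma (or B.\ H.\ Neumann's covering lemma), but tracking metric neighborhoods instead of exact equalities, and to run an induction on $|P|$. Since the ``bad'' set we must avoid is $\bigcup_{q\in Q} B_{\epsilon_p/3}(q)$ for the point $p$, the natural statement to induct on is slightly more general: given finite $P,Q$ with each orbit $Gp$ not $\epsilon_p$-bounded, and any choice of radii, find $g\in G$ with $d(gp,Q)\geq\frac13\epsilon_p$ for all $p\in P$. (The factor $\frac13$, rather than $\frac12$, is presumably what's needed to make the triangle-inequality bookkeeping in the inductive step go through.)

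First I would dispose of $|P|=0$ trivially and set up the inductive step: write $P=P_0\cup\{p\}$ and apply the inductive hypothesis to $P_0$ to get $g_0$ with $d(g_0 p', Q)\geq\frac13\epsilon_{p'}$ for $p'\in P_0$. Now I need to further adjust by some $h\in G$ fixing $P_0$ ``well enough'' — i.e.\ $h$ moving each $g_0 p'$ by less than, say, $\frac13\epsilon_{p'}$ from... no: the cleaner route is to follow the coset-covering argument. Suppose for contradiction that \emph{every} $g\in G$ has some $p\in P$ with $d(gp,Q)<\frac13\epsilon_p$. Then $G=\bigcup_{p\in P}\bigcup_{q\in Q}\{g : d(gp,q)<\tfrac13\epsilon_p\}$. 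The key observation is that each set $S_{p,q}=\{g: d(gp,q)<\frac13\epsilon_p\}$ is a union of cosets of the stabilizer-like set: if $g,g'\in S_{p,q}$ then $g^{-1}g'$ moves $p$ a distance $<\frac23\epsilon_p$ (by the triangle inequality and $G$ acting by isometries), so $S_{p,q}$ is contained in a single left coset $g\cdot H_p$ where $H_p=\{h\in G: d(hp,p)<\frac23\epsilon_p\}$ — wait, $H_p$ is not a subgroup, so I need the metric analogue of B.\ H.\ Neumann's lemma directly rather than invoking it as a black box.

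So the real work — and I expect this to be the main obstacle — is proving the metric coset-covering statement: if $G=\bigcup_{i=1}^n g_i H_i$ where each $H_i=\{h: d(hp_i,p_i)<r_i\}$ for some $p_i\in X$, $r_i>0$, then some orbit $Gp_i$ is $(3r_i/?)$-bounded, giving the needed contradiction with the non-boundedness hypothesis. I would prove this by the standard Neumann induction on $n$: if one of the $H_i$, say $H_n$, has the property that finitely many left cosets of $H_n$ cover $G$, then $Gp_n$ is totally bounded in the relevant scale (finitely many translates of the ball $B_{r_n}(p_n)$ cover the orbit), contradiction; otherwise, by the pigeonhole/inclusion-exclusion argument in Neumann's proof, the cosets $g_iH_i$ for $i<n$ with $H_i$ ``small'' (not of finite covering-index) together with the $H_n$-cosets still cover, and one reduces $n$. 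I'd need to check that each $H_i$ here, while not a subgroup, is at least symmetric and ``coarsely closed under the group operation'' in the weak sense ($H_i\cdot H_i \subseteq \{h: d(hp_i,p_i)<2r_i\}$) that Neumann's counting argument actually uses — this is exactly where the factor of $3$ gets spent: $\frac13\epsilon_p$ for the conclusion, $\frac23\epsilon_p$ after composing two such, versus $\epsilon_p$ in the non-boundedness hypothesis. Finally, I would assemble: translate the finitely-many-cosets-of-$H_p$-cover-$G$ conclusion into ``$Gp$ is $\epsilon_p$-bounded,'' contradicting the hypothesis, which completes the contradiction and hence the lemma.
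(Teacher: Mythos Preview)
Your approach is correct in spirit but takes a genuinely different route from the paper. The paper adapts the \emph{direct} inductive proof of P.~M.~Neumann's Lemma from \cite{BBMN}: fix $p\in P$, pick $a\in G$ with $d(ap,Q)\geq\epsilon_p$, apply the inductive hypothesis to $aP\setminus\{ap\}$ against a suitably enlarged target set $Q'$, and then if the resulting $h$ fails to separate $hap$ from $Q$, post-compose by a carefully chosen correction $ag_y^{-1}$ (where $g_y$ witnesses that the offending $y\in Q$ is near the orbit of $p$). The triangle inequality bookkeeping is what consumes the factor $3$. Your proposal instead adapts the \emph{B.~H.~Neumann covering} argument: assume failure, so $G=\bigcup_{p,q}S_{p,q}$ with $S_{p,q}=\{g:d(gp,q)<\tfrac13\epsilon_p\}$, observe that each nonempty $S_{p,q}$ lies in a translate of the approximate subgroup $H_p=\{h:d(hp,p)<\tfrac23\epsilon_p\}$, and run the Neumann reduction. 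This works, and the factor $3$ enters exactly where you say (you need $d(xp,Q)\geq\epsilon_p$ to make $xH_p$ disjoint from every $S_{p,q}$). One point to tighten: the induction must be on $|P|$ (the number of distinct base points, i.e., distinct approximate subgroups), not on the total number $n$ of covering sets, since replacing the $S_{p,q}$'s by translates of the remaining $S_{p',q'}$'s typically increases that total. With that fix, your argument goes through; it is the metric analogue of deriving P.~M.~Neumann from B.~H.~Neumann, whereas the paper gives the direct analogue.
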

\begin{proof}
The argument is a metric adaptation of the direct proof of P. M. Neumann's Lemma given in \cite{BBMN}. 
We proceed by induction on the size of $P$. We can treat $P=\emptyset$ as a trivial base case. So fix a nonempty set $P$ satisfying the hypotheses of the lemma, and assume that the lemma holds for all finite $P',Q'$ with $|P'|<|P|$.

Fix a point $p\in P$. Since $Gp$ is not $\epsilon_p$-bounded, we can fix some $a\in G$ such that $ap\not\in \bigcup_{y\in Q}B_{\epsilon_p}(y)$. Define $Q_0=\{y\in Q:B_{\frac{1}{3}\epsilon_p}(y)\cap Gp\neq\emptyset\}$ and, 
 for all $y\in Q_0$, choose some $g_y\in G$ such that $d(g_yp,y)<\frac{1}{3}\epsilon_p$. Note that if $x\in P$ then $ax$ does not have an $\epsilon_x$-bounded orbit. So we can apply the induction hypothesis to $P'\coloneqq aP\backslash \{ap\}$ and $Q'\coloneqq Q\cup \bigcup_{y\in Q_0}g_ya\inv Q$, and we obtain some $h\in G$ such that $d(hax,Q')\geq\frac{1}{3}\epsilon_x$ for all $x\in P\backslash\{p\}$. 

If $d(hap,Q)\geq\frac{1}{3}\epsilon_p$ then, setting $g=ha$, we have $d(gx,Q)\geq\frac{1}{3}\epsilon_x$ for all $x\in P$. So we may assume that $d(hap,Q)<\frac{1}{3}\epsilon_p$. Fix $y\in Q$ such that $d(hap,y)<\frac{1}{3}\epsilon_p$. Then $y\in Q_0$. Set $g=a g_y\inv ha$. We show that $d(gx,Q)\geq\frac{1}{3}\epsilon_x$ for all $x\in P$. 

Note that if $x\in P\backslash\{p\}$, then $d(gx,Q)=d(hax,g_ya\inv Q)\geq d(hax,Q')\geq \frac{1}{3}\epsilon_x$. So we just need to show that  $d(gp,Q)\geq\frac{1}{3}\epsilon_p$. Toward a contradiction, suppose there is some $q\in Q$ with $d(gp,q)<\frac{1}{3}\epsilon_p$. Then $d(hap,g_ya\inv q)<\frac{1}{3}\epsilon_p$. Recall that $d(hap,y)<\frac{1}{3}\epsilon_p$ by choice of $y$, and $d(g_yp,y)<\frac{1}{3}\epsilon_p$ by choice of $g_y$. So by the triangle inequality, we have
\[
d(ap,q)=d(g_yp,g_ya\inv q)\leq d(g_yp,y)+d(y,hap)+d(hap,g_ya\inv q)<\epsilon_p.
\]
But this contradicts the initial choice of $a$. 
\end{proof}

Next we use the previous result to prove another version of P. M. Neumann's Lemma for metric spaces, which focuses on uniform separation of compact sets.

\begin{lemma}\label{lem:PMc}
Let $X$ be a metric space and suppose $G$ is a group of isometries of $X$. Let $C\seq X$ be a compact set such that no point in $C$ has a totally bounded orbit. Then there is an $\epsilon>0$ such that for any $\epsilon$-bounded set $D\seq X$ there is some $g\in G$ such that $d(gC,D)\geq\epsilon$.
\end{lemma}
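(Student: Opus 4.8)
The plan is to first promote the pointwise hypothesis on $C$ to a uniform one, and then obtain $g$ from Lemma~\ref{lem:PMc1} applied to suitable finite approximations of $C$ and $D$.

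\emph{Uniformization.} The first step is to produce a single $\epsilon_0>0$ such that $Gp$ is not $\epsilon_0$-bounded for \emph{every} $p\in C$. Suppose not; then for each $n\geq 1$ there is $p_n\in C$ whose orbit is $\frac{1}{n}$-bounded. Since $C$ is compact, after passing to a subsequence we may assume $p_n\to p$ for some $p\in C$. I claim $Gp$ is totally bounded, contradicting the hypothesis on $C$. Indeed, given $\delta>0$, choose $n$ with $\frac{1}{n}<\frac{\delta}{2}$ and $d(p_n,p)<\frac{\delta}{2}$, and let $F$ be a finite $\frac{1}{n}$-net for $Gp_n$. For any $g\in G$ there is $y\in F$ with $d(gp_n,y)<\frac{1}{n}$, and since $g$ is an isometry, $d(gp,y)\leq d(p,p_n)+d(gp_n,y)<\delta$; so $F$ is a finite $\delta$-net for $Gp$. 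As $\delta>0$ was arbitrary, $Gp$ is totally bounded.

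\emph{Reduction.} Now set $\epsilon=\frac{1}{9}\epsilon_0$. Given an $\epsilon$-bounded set $D\seq X$, fix a finite $\epsilon$-net $Q\seq X$ for $D$, and, using compactness of $C$, fix a finite set $P\seq C$ with $d(c,P)<\epsilon$ for all $c\in C$. Since $P\seq C$, no point of $P$ has an $\epsilon_0$-bounded orbit, so Lemma~\ref{lem:PMc1} applied to $P$ and $Q$ with $\epsilon_p=\epsilon_0$ for each $p\in P$ yields some $g\in G$ with $d(gp,Q)\geq\frac{1}{3}\epsilon_0$ for all $p\in P$. To conclude, fix $c\in C$ and $e\in D$ and choose $p\in P$ with $d(c,p)<\epsilon$ and $q\in Q$ with $d(e,q)<\epsilon$. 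Then, using that $g$ is an isometry,
\[
d(gc,e)\;\geq\;d(gp,q)-d(gp,gc)-d(q,e)\;>\;\tfrac{1}{3}\epsilon_0-\epsilon-\epsilon\;=\;\epsilon,
\]
so $d(gC,D)\geq\epsilon$. (The cases $C=\emptyset$ or $D=\emptyset$ are trivial.)

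The only real obstacle is the uniformization step: it is precisely where compactness of $C$ and the isometric nature of the action are needed, and it cannot be skipped, since otherwise the finite approximation $P$ — and hence $\min_{p\in P}\epsilon_p$ — would depend on $\epsilon$, making the choice of $\epsilon$ circular. Everything after that is routine manipulation of $\epsilon$-nets and the triangle inequality, with Lemma~\ref{lem:PMc1} supplying the combinatorial core.
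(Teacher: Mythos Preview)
Your proof is correct and follows essentially the same approach as the paper: first uniformize the orbit-unboundedness over $C$ via compactness, then pass to finite $\epsilon$-nets and invoke Lemma~\ref{lem:PMc1}, finishing with the triangle inequality. The only cosmetic difference is that you carry out the uniformization by sequential compactness and contradiction, whereas the paper extracts a finite subcover directly; the constants differ accordingly but the argument is the same.
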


\begin{proof}
Given $a\in C$, fix some $\delta_a>0$ such that $Ga$ is not $\delta_a$-bounded. Since $C$ is compact, there is some finite set $A\seq C$ such that $C\seq \bigcup_{a\in A}B_{\frac{1}{2}\delta_a}(a)$. Set $\epsilon=\frac{1}{18}\min_{a\in A}\delta_a$. We claim that no point in $C$ has a $9\epsilon$-bounded orbit. So fix $x\in C$ and suppose, toward a contradiction, that $F$ is a finite $9\epsilon$-net for $Gx$. Fix $a\in A$ such that $d(x,a)<\frac{1}{2}\delta_{a}$. Then for any $g\in G$, there is some $y\in F$ such that $d(gx,y)<9\epsilon\leq \frac{1}{2}\delta_a$, whence $d(ga,y)\leq d(gx,ga)+d(ga,y)<\delta_a$. So $F$ is a finite $\delta_a$-net for $Ga$, which is a contradiction. 

Now fix an $\epsilon$-bounded set $D\seq X$. We find some $g\in G$ such that $d(gC,D)\geq\epsilon$. Let $P$ be a finite $\epsilon$-net for $C$, and let $Q$ be a finite $\epsilon$-net for $D$. Since $C$ is compact, we may assume $P\seq C$, and so no point in $P$ has a $9\epsilon$-bounded orbit. By Lemma \ref{lem:PMc1}, there is some $g\in G$ such that $d(gP,Q)\geq 3\epsilon$. Toward a contradiction, suppose there are $x\in C$ and $y\in D$ such that $d(gx,y)<\epsilon$. Then there are $p\in P$ and $q\in Q$ such that $\max\{d(p,x),d(q,y)\}<\epsilon$. So 
\[
\textstyle d(gp,q)\leq d(gp,gx)+d(gx,y)+d(y,q)<3\epsilon,
\]
which contradicts the choice of $g$. 
\end{proof}

Note that if $X$ is a set with the discrete metric, then Lemmas \ref{lem:PMc} and \ref{lem:PMc1} both reduce to the original statement of P. M. Neumann's Lemma.

\section{Applications in continuous logic}\label{sec:cont}

\subsection{Review on algebraic closure}

Let $T$ be a complete continuous theory with monster model $\M$.
By definition, a point $a\in\M$ is in the \textbf{algebraic closure} of $C\subset\M$ if there is a compact subset of $\M$ that contains $a$ and is definable over $C$. We will use the following equivalent description of algebraic closure (which essentially quotes  \cite[Exercise 10.8]{BBHU}). 

\begin{fact}\label{fact:acldef}
Given $C\subset\M$ and $a\in\M$, the following are equivalent.
\begin{enumerate}[$(i)$]
\item $a\in\acl(C)$.
\item The $\Aut(\M/C)$-orbit of $a$ is totally bounded.
\item The $\Aut(\M/C)$-orbit of $a$ is compact.  
\item The $\Aut(\M/C)$-orbit of $a$ has bounded cardinality.
\end{enumerate}
\end{fact}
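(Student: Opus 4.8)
The plan is to prove the four-way equivalence in Fact~\ref{fact:acldef} by establishing a cycle of implications, taking the metric space $X = \M$ (or more precisely the relevant sort) equipped with its metric and $G = \Aut(\M/C)$ acting by isometries. The natural cycle is $(i)\Rightarrow(iii)\Rightarrow(ii)\Rightarrow(iv)\Rightarrow(i)$, though some of these are easier in other orders, so I will organize it as $(i)\Rightarrow(iii)$, $(iii)\Rightarrow(ii)$, $(ii)\Rightarrow(i)$, and separately $(ii)\Leftrightarrow(iv)$.

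First I would handle $(i)\Rightarrow(iii)$: if $a\in\acl(C)$ then by definition there is a $C$-definable compact set $K\ni a$. Since $K$ is definable over $C$, it is $\Aut(\M/C)$-invariant, so the orbit $Ga$ is contained in $K$; a closed subset of a compact set is compact, and the orbit closure is contained in $K$, so I need to check the orbit itself is closed — actually it is cleaner to argue that the orbit is contained in the compact $K$ hence totally bounded, which already gives $(ii)$, and for $(iii)$ note that by strong homogeneity the orbit is the set of realizations of a partial type over $C$, hence (type-definable and) closed, and a closed subset of a compact set is compact. The implication $(iii)\Rightarrow(ii)$ is immediate since compact subsets of metric spaces are totally bounded. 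For $(ii)\Rightarrow(iv)$: a totally bounded set in a metric space has cardinality at most continuum (cover by finitely many balls of radius $1/n$ for each $n$, and use that in a monster model distinct realizations of a type are $d$-separated only up to... ) — more carefully, total boundedness alone bounds cardinality by $2^{\aleph_0}$ via the standard net argument, and this is a bounded cardinal relative to a sufficiently saturated $\M$, which is what $(iv)$ asks.

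The reverse direction $(iv)\Rightarrow(i)$ is where the real content lies, and this is where I expect to invoke Lemma~\ref{lem:PMc} (the metric P.~M. Neumann lemma). Suppose $a\notin\acl(C)$; I want to show the orbit $Ga$ is not of bounded cardinality. Taking $C = \{a\}$ (a single point, trivially compact) in Lemma~\ref{lem:PMc}: if no conjugate of $a$ — in particular $a$ itself — has a totally bounded $\Aut(\M/C)$-orbit, then there is $\epsilon>0$ such that for every $\epsilon$-bounded $D$ there is $g$ with $d(ga, D)\geq \epsilon$. This lets me build, by transfinite recursion of any prescribed small length $\kappa$, a sequence $(a_i)_{i<\kappa}$ of conjugates of $a$ over $C$ that are pairwise $\epsilon$-separated: at stage $i$, the set $D = \{a_j : j<i\}$ is... not obviously $\epsilon$-bounded once $i$ is large, so I actually need the contrapositive packaging — if at some stage the partial sequence were already $\epsilon$-bounded I could extend, and if it is not $\epsilon$-bounded then it is already a large $\epsilon$-separated family, so either way I get arbitrarily large $\epsilon$-separated sets of conjugates, contradicting boundedness of the orbit (the orbit has cardinality $\le |\M|$, but saturation forces it to be $<\kappa$ for the saturation cardinal). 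So the main obstacle is setting up this recursion cleanly: phrasing the dichotomy so that Lemma~\ref{lem:PMc} is applied to the running finite/bounded approximations and extracting an $\epsilon$-separated family of conjugates of unbounded size, and then converting "orbit not of bounded cardinality" back to "$a\notin\acl(C)$" — which is just the contrapositive of $(i)\Rightarrow(iv)$.

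To finish, I would note $(ii)\Rightarrow(iv)$ was done above and $(iv)\Rightarrow(ii)$ follows from $(iv)\Rightarrow(i)\Rightarrow(ii)$, closing the loop. A secondary technical point worth stating carefully is that $\Aut(\M/C)$ does act by isometries on $\M$ — automorphisms of a continuous structure preserve the metric $d$, which is part of the language — so Lemma~\ref{lem:PMc} genuinely applies. I expect the $(i)\Rightarrow(iii)$ step to require a small amount of care about why the orbit (not just its closure) is compact, which one gets from the orbit being type-definable over $C$ by strong homogeneity of $\M$; everything else is bookkeeping, and the genuinely nontrivial input is Lemma~\ref{lem:PMc} used in the $(iv)\Rightarrow(i)$ direction.
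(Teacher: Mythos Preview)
Note first that the paper does not prove Fact~\ref{fact:acldef}; it is quoted as background from \cite[Exercise 10.8]{BBHU}, so there is no paper proof to compare against. Your argument, however, has a genuine circularity. To prove $(iv)\Rightarrow(i)$ you assume $\neg(i)$ and invoke Lemma~\ref{lem:PMc} on the singleton $\{a\}$, but the hypothesis of that lemma is that $a$'s orbit is not \emph{totally bounded}, i.e., $\neg(ii)$. Deducing $\neg(ii)$ from $\neg(i)$ is exactly the implication $(ii)\Rightarrow(i)$, which you announced in your plan but never actually proved---you silently replaced it with $(ii)\Rightarrow(iv)$ in the second paragraph. So the chain $(i)\Rightarrow(iii)\Rightarrow(ii)\Rightarrow(iv)\Rightarrow(i)$ has its last arrow resting on the unproved $(ii)\Rightarrow(i)$, and nothing closes. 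In fact, what your Lemma~\ref{lem:PMc} argument establishes (once its hypothesis is granted) is only $\neg(ii)\Rightarrow\neg(iv)$, i.e., $(iv)\Rightarrow(ii)$, which does not touch $(i)$ at all.

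The real content is the missing step back to $(i)$, and it requires a different, more elementary idea than the metric Neumann lemma. By strong homogeneity the orbit of $a$ is the set of realizations of $\tp(a/C)$, hence type-definable over $C$ and closed; if it is totally bounded it is therefore compact (closed subsets of a complete metric space being complete), and a compact $C$-type-definable set is the zeroset of a $C$-definable predicate, giving $(i)$. The implication $(iv)\Rightarrow(ii)$ does not need Lemma~\ref{lem:PMc} either: if the orbit fails to be $\epsilon$-bounded for some $\epsilon>0$, a greedy construction followed by logical compactness produces unboundedly many pairwise $\epsilon$-separated conjugates, contradicting $(iv)$. In short, Fact~\ref{fact:acldef} is basic background that the paper's metric Neumann lemmas are meant to \emph{use}, not something one should try to derive from them.
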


\subsection{Full existence for $\inda$}\label{sec:FE}

In this section, we use the metric space version of P. M. Neumman's Lemma to prove that $\inda$ satisfies full existence in any continuous theory. Let us first summarize the previous literature related to this question. In unpublished notes of the first author and Terry \cite{CoTeCL}, it is shown that $\inda$ satisfies full existence in any continuous theory $T$ for which no complete type forks over its own domain of parameters (so, e.g., if $T$ is simple). In \cite{AGK2}, Andrews, Goldbring, and Keisler prove that if $T$ is a discrete theory in which $\acl$ and $\dcl$ coincide, then $\inda$ satisfies full existence in the continuous randomization $T^R$. Finally, we note an unpublished anonymous result that if  just the metric $d(x,y)$ is stable, then local stability (e.g., as in \cite{BYU}) can be used to prove that $\inda$ satisfies full existence. Altogether, the main result of this section generalizes these various special cases. Moreover, we provide a proof of full existence for $\inda$ that is in direct analogy to the standard proof for discrete logic, and does not require more advanced notions such as forking, randomizations, or local stability. 

In some sources, the question of full existence for $\inda$ is also rephrased in terms of the \emph{extension} axiom. So we take a moment to explain this. Let $T$ be a discrete or continuous theory, and consider the following axioms of an invariant ternary relation $\ind$ on small subsets of $\M$.
\begin{enumerate}[\hspace{5pt}$\ast$]
\item \emph{(extension)} If $A\ind_C B$ and $D\supseteq B$, then there is  $A'\equiv_{BC}A$ such that $A'\ind_C D$.
\item \emph{(existence)} $A\ind_C C$ for any $A,C\subset\M$.
\item \emph{(right monotonicity)} If $A\ind_C B$ and $D\seq B$ then $A\ind_C D$.
\item \emph{(right transitivity)} If $A\ind_C B$ and $A\ind_{BC} D$, then   $A\ind_C D$.
\end{enumerate}
The following facts are straightforward exercises.

\begin{fact}\label{fact:easy}
Let $\ind$ be an invariant ternary relation on small subsets of $\M$.
\begin{enumerate}[$(a)$]
\item If $\ind$ satisfies existence, extension, and right monotonicity, then $\ind$ satisfies full existence.
\item If $\ind$ satisfies full existence and right transitivity, then $\ind$ satisfies extension.
\end{enumerate}
\end{fact}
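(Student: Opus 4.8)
The plan is to prove the two implications in Fact~\ref{fact:easy} directly from the definitions, since each is a short formal manipulation of the axioms.

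For part $(a)$, fix small $A,B,C\subset\M$; I want to produce $A'\equiv_C A$ with $A'\ind_C B$. The idea is to first enlarge the base to $C$ by existence, then use extension to push the righthand side out to $B$. Concretely, by existence we have $A\ind_C C$. Applying extension with the larger set $D\coloneqq BC\supseteq C$ in the role of ``$D\supseteq B$'' (here the second argument of $\ind$ is $C$, so we take $B\coloneqq C$ and $D\coloneqq BC$ in the statement of extension), we obtain $A'\equiv_C A$ with $A'\ind_C BC$. Then right monotonicity with $B\seq BC$ gives $A'\ind_C B$. Since $A'\equiv_C A$, this is exactly full existence. (One should note the conjugacy in extension is over $C$, which is the base here, so no loss of parameters occurs; and the notational clash between the ``$B$'' of the statement and the ``$B$'' of the extension axiom is harmless once one instantiates carefully.)

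For part $(b)$, suppose $\ind$ satisfies full existence and right transitivity, and suppose $A\ind_C B$ with $D\supseteq B$. I want $A'\equiv_{BC}A$ with $A'\ind_C D$. Apply full existence with base $BC$ to the sets $A$, $D$, $BC$: this yields $A'\equiv_{BC}A$ with $A'\ind_{BC} D$. Since $A'\equiv_{BC}A$ and $A\ind_C B$ is an invariant relation, we also have $A'\ind_C B$. Now right transitivity (with $A'$, $C$, $B$, $D$ in the roles of $A$, $C$, $B$, $D$) combines $A'\ind_C B$ and $A'\ind_{BC} D$ into $A'\ind_C D$, as desired.

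I do not expect any genuine obstacle here; the only thing to be careful about is bookkeeping of the base parameters and the overloaded variable names in the axiom schemes, so that the conjugacies claimed ($\equiv_C$ in part $(a)$, $\equiv_{BC}$ in part $(b)$) match exactly what the definition of full existence and extension require. Invariance of $\ind$ under $\Aut(\M)$ is used implicitly each time we transport a relation along a conjugacy.
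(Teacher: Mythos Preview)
Your argument is correct in both parts, with the bookkeeping handled properly (in particular the use of invariance to transport $A\ind_C B$ to $A'\ind_C B$ in part $(b)$). The paper does not give a proof of this fact at all, stating only that both implications are ``straightforward exercises''; your write-up is exactly the intended routine verification.
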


It is  easy to check that $\inda$ satisfies existence, right monotonicity, and right transitivity. Therefore the question of full existence for $\inda$ is  equivalent to the question of extension. We now prove that this question has a positive answer.

\begin{theorem}\label{thm:a-exist}
In any continuous theory $T$, $\inda$ satisfies full existence (and thus also satisfies extension).
\end{theorem}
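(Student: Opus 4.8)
The plan is to mimic the discrete proof (Proposition before Section~\ref{sec:discrete}) but replace P.~M. Neumann's Lemma with its metric-space analogue, Lemma~\ref{lem:PMc}. Fix small $A,B,C\subset\M$. As in the discrete case, it suffices to find $A'\equiv_C A$ with $\acl(A'C)\cap\acl(BC)=\acl(C)$, and by countable character (see \cite[Proposition 2.3.1]{AGK}) together with a standard compactness/limit argument it is enough to handle the case where $A$ and $B$ are countable, or even to reduce to finite approximations carefully. The key difference from discrete logic is that $\acl$ is no longer a closure operator on tuples in the naive sense: $\acl(AC)$ is a subset of $\M$ whose points are those with totally bounded orbit over $AC$. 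So instead of separating finite sets of points, I will separate the relevant \emph{compact orbits}.

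Here is the main line of argument. Let $B^\dagger=\acl(BC)\setminus\acl(C)$. I want $A'\equiv_C A$ such that $\acl(A'C)\cap B^\dagger=\emptyset$. Enumerate $A$ as a (small) tuple $\bar a$ and consider a point $b\in B^\dagger$; its $\Aut(\M/C)$-orbit $\mathcal{O}_b$ is compact (by Fact~\ref{fact:acldef}) but, since $b\notin\acl(C)$, no point of $\mathcal{O}_b$ has a totally bounded $\Aut(\M/C)$-orbit — indeed every point of $\mathcal{O}_b$ is in the $\Aut(\M/C)$-orbit of $b$, hence has the same (compact but not totally bounded... wait, it \emph{is} compact) — the relevant point is that $\mathcal{O}_b$ is a compact set no point of which lies in $\acl(C)$, which by Fact~\ref{fact:acldef} is exactly the hypothesis of Lemma~\ref{lem:PMc} with $X=\M$ and $G=\Aut(\M/C)$. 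Wait: I must be careful that the metric on $\M$ here is the sort-by-sort metric on the relevant single sort; a point $b$ ranges in some sort $S$, $\mathcal{O}_b\subseteq S$, and $\M$ restricted to $S$ is a metric space on which $\Aut(\M/C)$ acts by isometries. Applying Lemma~\ref{lem:PMc} to the compact set $\mathcal{O}_b$ yields $\epsilon_b>0$ such that any $\epsilon_b$-bounded $D\subseteq S$ can be moved off $\mathcal{O}_b$ by some $g\in\Aut(\M/C)$, i.e., $d(g\mathcal{O}_b,D)\geq\epsilon_b$.

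Now I need to upgrade from moving away from \emph{one} compact orbit to simultaneously (in the limit) moving $\acl(AC)$ off \emph{all} of $B^\dagger$. This is where compactness and a tree/back-and-forth construction come in. Fix a finite set $b_1,\dots,b_n\in B^\dagger$ (lying in sorts $S_1,\dots,S_n$) and a finite sub-tuple $\bar a_0$ of a fixed enumeration of $\acl(AC)$, say with $\bar a_0$ ranging in sorts $R_1,\dots,R_m$. Each coordinate $a_{0,j}$ has totally bounded orbit over $AC$; I claim that for any $\eta>0$ there is $A'\equiv_C A$ with the corresponding tuple $\bar a_0'$ staying at distance $\geq$ (something positive) from each $\mathcal{O}_{b_i}$ in the appropriate sort. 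The mechanism: iterate Lemma~\ref{lem:PMc} as in the proof of Proposition~\ref{prop:dadisc}, producing sufficiently many pairwise "far-apart" $C$-conjugates of the finite configuration $\bar a_0$ over the compact sets $\mathcal{O}_{b_1},\dots,\mathcal{O}_{b_n}$; since $\acl(b_1\cdots b_n C)$ restricted to each sort is a totally bounded — hence $\epsilon$-bounded for the relevant $\epsilon$ — set, a conjugate landing far from $\bigcup_i \mathcal{O}_{b_i}$ in fact lands outside $\acl(b_1\cdots b_nC)$, hence (using that $\mathcal{O}_{b_i}$ and $\acl$ behave well) can be arranged to witness $\acl(\bar a_0' C)\cap\{b_1,\dots,b_n\}=\emptyset$. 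Then a compactness argument over the (small) index sets assembles a single $A'\equiv_C A$ with $\acl(A'C)\cap B^\dagger=\emptyset$: the partial type asserting "$x\equiv_C \bar a$ and for each coordinate $b$ of a fixed enumeration of $B^\dagger$ and each coordinate of a fixed enumeration of $\acl(xC)$, the distance is $\geq\epsilon_b$" is finitely satisfiable by the previous step and consistent by saturation.

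The main obstacle, and the step I'd spend the most care on, is the bookkeeping in the middle paragraph: in continuous logic $\acl(\bar a_0'C)$ depends on $\bar a_0'$ in a way that is not first-order-definable uniformly, so I cannot just quantify "$\acl(x C)\cap B^\dagger=\emptyset$" as a type directly. The fix is to phrase everything in terms of the compact orbits: for $b\in B^\dagger$, $b\notin\acl(A'C)$ iff $b$'s orbit over $A'C$ is not totally bounded iff there is $\epsilon>0$ with $\{a'' : a''\equiv_{A'C} b\}$ not $\epsilon$-bounded; and one shows that the $\epsilon_b$ supplied by Lemma~\ref{lem:PMc} can be taken uniformly (over the $\Aut(\M/C)$-orbit of $b$, which is all of $\mathcal{O}_b$, and hence is $C$-invariant). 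Getting the quantifiers straight — "move $\acl(AC)$ away from the orbit of $b$" versus "make the orbit of $b$ over $A'C$ large" — and checking the relevant finite-character reduction (so that I only ever separate a finite configuration from finitely many compact orbits before taking a limit) is the delicate part; once that framework is set up, the iteration of Lemma~\ref{lem:PMc} runs exactly parallel to the discrete argument and to the proof of Proposition~\ref{prop:dadisc}.
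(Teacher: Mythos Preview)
Your sketch has the right spirit but contains a real confusion and invokes the wrong lemma, and together these leave a gap.

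First, the orbit mix-up you half-catch: for $b\in B^\dagger$ the $\Aut(\M/C)$-orbit of $b$ is \emph{not} compact, precisely because $b\notin\acl(C)$; what is compact is the $\Aut(\M/BC)$-orbit. But in fact no compact orbit is needed at all. Second, and more importantly, Lemma~\ref{lem:PMc} separates \emph{one} compact set from an $\epsilon$-bounded set, whereas your compactness reduction needs to separate \emph{several} $b_i$'s from a finite tuple \emph{simultaneously} with a single automorphism, and with $\epsilon$'s that do not shrink as the finite set of $b$'s grows. ``Iterating'' Lemma~\ref{lem:PMc} does not give that; the $\epsilon$ it produces depends on the whole compact input. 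The paper instead uses Lemma~\ref{lem:PMc1}, which is tailored to this: given finite $P=\{b_1,\dots,b_n\}$ with each $\Aut(\M/C)$-orbit not $\epsilon_{b_i}$-bounded, and any finite $Q$, one gets a single $\sigma\in\Aut(\M/C)$ with $d(\sigma(b_i),Q)\geq\tfrac{1}{3}\epsilon_{b_i}$ for every $i$.

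This choice also dissolves your bookkeeping worry about ``$\acl(xC)$''. Do not try to quantify over the algebraic closure of a variable. Instead enumerate $\acl(AC)$ itself as a tuple $(a_i)_{i<\lambda}$, set $p(x)=\tp((a_i)_{i<\lambda}/C)$, and for each $b\in B^\dagger$ fix once and for all some $\epsilon_b>0$ with the $\Aut(\M/C)$-orbit of $b$ not $\epsilon_b$-bounded (Fact~\ref{fact:acldef}). The partial type
\[
p(x)\cup\bigl\{d(b,x_i)\geq\tfrac{1}{3}\epsilon_b : b\in B^\dagger,\ i<\lambda\bigr\}
\]
is a set of closed conditions; any realization enumerates $\acl(A'C)$ for some $A'\equiv_C A$, and every element of it is at positive distance from every $b\in B^\dagger$, so $\acl(A'C)\cap B^\dagger=\emptyset$. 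Finite satisfiability is a single application of Lemma~\ref{lem:PMc1} (take $P$ the finitely many $b$'s, $Q$ the finitely many $a_i$'s, then apply $\sigma^{-1}$ to $Q$). No compact orbits, no tree construction, no iteration.
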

\begin{proof}
   Fix $A$, $B$, and $C$. Let and $B^\dagger = \acl(BC)\backslash \acl(C)$. For each $b \in B^\dagger$, let $\e_b > 0$ be a fixed number such that the $\Aut(\M/C)$-orbit of $b$ is not $\e_b$-bounded (such a number exists by Fact \ref{fact:acldef}).  Let $(a_i)_{i<\lambda}$ be an enumeration of $\acl(AC)$, and let $p(x) = \tp(a_{<\lambda}/C)$ in variables $x=(x_i)_{i<\lambda}$. It suffices show that
   \[
   \textstyle p(x) \cup \{d(b,x_i) \geq \frac{1}{3}\e_b : b \in B^\dagger\text{, }i<\lambda\}
   \]
   is finitely satisfiable. Indeed, any realization of this type is of the form $\acl(A'C)$ for some $A'\equiv_C A$. Moreover, if $a\in \acl(A'C)$ then we have $d(b,a)\geq\frac{1}{3}\e_b$ for all $b\in B^{\dagger}$, which implies $A'\inda_C B$.
   
   Fix a finite set $P\seq B^\dagger$ and finitely many indices $i_1<\ldots<i_n<\lambda$. We  will show that the type
   \[
   \textstyle q(x_{i_1},\ldots,x_{i_n})=p(x)|_{x_{i_1},\ldots,x_{i_n}}\cup\{d(b,x_{i_k})\geq\frac{1}{3}\e_b:b\in P,~1\leq k\leq n\}
   \]
   is satisfiable. Let $Q=\{a_{i_1},\ldots,a_{i_n}\}$. For all $b\in P$, the $\Aut(\M/C)$-orbit of $b$ is not $\epsilon_b$-bounded. So by Lemma \ref{lem:PMc1}, there is $\sigma\in\Aut(\M/C)$ such that $d(\sigma(b),Q)\geq\frac{1}{3}\e_b$ for each $b\in P$. Thus $(\sigma\inv(a_{i_1}),\ldots,\sigma\inv(a_{i_n}))$ realizes $q$. 
\end{proof}

\subsection{Bounded closure in $\M^{\heq}$}\label{sec:hyp}

Let $T$ be a discrete or continuous theory. Recall that $\indb$ denotes the hyperimaginary analogue of $\inda$, i.e., for $A,B,C\subset\M^{\heq}$, $A\indb_C B$ if and only if $\bdd(AC)\cap \bdd(BC)=\bdd(C)$. The purpose of this section is to explain how our positive answer to Question \ref{ques1} (full existence for $\inda$ in any continuous theory) implies a positive answer to Question \ref{ques2} (full existence for $\indb$ in $\M^{\heq}$).  The general idea is that $\indb$ is  controlled by $\inda$ in the continuous analogue of $T^{\eq}$. In particular, any hyperimaginary is interdefinable with a bounded set of  hyperimaginaries given by countably type-definable equivalence relations, and any such `countable hyperimaginary' can be treated as an ordinary imaginary in continuous logic. This connection is established in early work of Ben Yaacov \cite{BCat} on \emph{compact abstract theories}, and a focused treatment for continuous logic can be found in the second author's thesis \cite[Chapter 3]{Hanson-thesis}.
 We take the opportunity here to explain the necessary details, which are based on foundations developed by Hart, Kim, and Pillay \cite{HKP} and Ben Yaacov \cite{BCat}.
 
 We start with the following standard fact (see \cite[Fact 1.1]{HKP}), whose proof generalizes easily to continuous logic (and beyond, e.g., \cite[Theorem 2.22]{BCat}). 
  
  \begin{fact}\label{fact:countableEQ}
   Let $E(x,y)$ be a type-definable equivalence relation, and let $(E_i(x,y))_{i<\lambda}$ enumerate all countably type-definable equivalence relations such that $E(x,y)\vdash E_i(x,y)$. Then for any real tuples $a$ and $b$ from $\M$, $E(a,b)$ holds if and only if $E_i(a,b)$ holds for all $i<\lambda$.
  \end{fact}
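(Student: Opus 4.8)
The statement is exactly \cite[Fact 1.1]{HKP} transported to continuous logic, and the strategy is the standard one. The forward direction is immediate: $E(x,y)\vdash E_i(x,y)$ for every $i$, so $E(a,b)$ forces $E_i(a,b)$ for all $i<\lambda$. All the content is in the converse, and the key observation is that the relation ``$E_i(x,y)$ holds for all $i<\lambda$'' is itself a type-definable equivalence relation refining each $E_i$, and one shows it is \emph{finer} than or equal to $E$ by exhibiting $E$ as (a consequence of) a countably type-definable equivalence relation among its own consequences, or rather by a direct compactness computation.

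The plan is as follows. Suppose $a,b$ are real tuples with $E_i(a,b)$ for all $i<\lambda$, and suppose toward a contradiction that $\neg E(a,b)$. Since $E$ is type-definable, write $E(x,y)\equiv\bigwedge_{j\in J}\psi_j(x,y)$ for a (possibly large) set of conditions $\psi_j$; then $\neg E(a,b)$ means some $\psi_{j_0}(a,b)$ fails, which in continuous logic means $\psi_{j_0}(x,y)$ (a condition of the form $\varphi(x,y)=0$) takes value bounded away from $0$ at $(a,b)$, say $\varphi(a,b)\geq\delta>0$. First I would cut down to a countable sublanguage: the tuples $a,b$ and the condition $\varphi$ together involve only countably many coordinates and countably many formulas, so I can find a countable set $\Sigma(x,y)$ of conditions, closed under the relevant finite operations, with $E(x,y)\vdash\Sigma(x,y)$, and such that $\Sigma$ ``sees'' the failure $\varphi(a,b)\geq\delta$. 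The point is to arrange that the equivalence relation $E_\Sigma(x,y)$ generated by $\Sigma$ is countably type-definable and still a consequence of $E$; then $E_\Sigma$ is one of the $E_i$, so $E_\Sigma(a,b)$ holds, yet by construction $E_\Sigma(a,b)$ contradicts $\varphi(a,b)\geq\delta$.

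The one genuine subtlety — and the step I expect to be the main obstacle — is that a countable set of conditions $\Sigma$ need not already define an \emph{equivalence} relation; to get an honest countably type-definable equivalence relation one must close $\Sigma$ off under reflexivity, symmetry, and (the problematic one) transitivity. Transitive closure of a type-definable relation is in general only $\bigvee\bigwedge$, not type-definable. The fix, exactly as in the discrete proof, is to use that $E$ is already a transitive relation sitting above $\Sigma$: one builds an increasing $\omega$-chain of countable sets $\Sigma=\Sigma_0\subseteq\Sigma_1\subseteq\cdots$, where $\Sigma_{n+1}$ is obtained from $\Sigma_n$ by adding, for each pair of conditions witnessing a potential transitivity failure, a consequence of $E$ that is still entailed by $E$ and ``repairs'' it — using that $E$ itself is transitive to guarantee such a consequence exists in the (large) type defining $E$. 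Taking $\Sigma_\omega=\bigcup_n\Sigma_n$, which is still countable, the relation $E_{\Sigma_\omega}$ defined by $\bigwedge\Sigma_\omega$ is genuinely transitive (any transitivity instance is already resolved at some finite stage), hence a countably type-definable equivalence relation with $E\vdash E_{\Sigma_\omega}\vdash\varphi(x,y)=0$-ish bound, giving the contradiction. In continuous logic the only extra care is that ``repairing transitivity'' must be done with the metric triangle-type inequalities built into how type-definable equivalence relations interact with the metric (cf.\ \cite[Theorem 2.22]{BCat}); this is routine given the continuous analogue of the Hart–Kim–Pillay machinery, so I would cite \cite{BCat,Hanson-thesis} for the continuous bookkeeping and present the chain construction as the heart of the argument.
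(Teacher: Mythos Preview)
The paper does not actually give a proof of this fact; it states it as a standard result and cites \cite[Fact 1.1]{HKP} for the discrete version together with \cite[Theorem 2.22]{BCat} for the extension to continuous logic and beyond. Your proposal is exactly the standard Hart--Kim--Pillay argument those citations point to: from a condition $\varphi$ in the type defining $E$ that fails at $(a,b)$, use compactness and the fact that $E$ is already an equivalence relation to build an $\omega$-chain of countable subtypes, each stage adding the finitely many witnesses needed to repair symmetry and transitivity for the conditions introduced so far, so that the union is a countably type-definable equivalence relation $E_i$ with $E\vdash E_i\vdash (\varphi=0)$, contradicting $E_i(a,b)$.

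Two small comments on presentation. First, the phrase ``cut down to a countable sublanguage'' is misleading: you are selecting a countable subset of the conditions axiomatizing $E$, not restricting the language. Second, the compactness step deserves to be made explicit rather than folded into the phrase ``using that $E$ itself is transitive'': the point is that $E(x,y)\wedge E(y,z)\vdash \psi(x,z)$ for each condition $\psi$ in the type, so by compactness a \emph{finite} conjunction of conditions from the type already entails $\psi(x,z)$ (with the usual $\varepsilon$-slack in the continuous case), and these finitely many conditions are what you throw into $\Sigma_{n+1}$. With those clarifications the argument is complete and matches what the paper is invoking by citation.
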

  
    Let $\M^{\omega\heq}$ denote the set of  hyperimaginaries defined by countably type-definable equivalence relations.
  A consequence of Fact \ref{fact:countableEQ} is that any hyperimaginary in $\M^{\heq}$ is interdefinable with a sequence from $\M^{\omega\heq}$.
Given $A\seq\M^{\heq}$, define $\bdd^\omega(A)\coloneqq \bdd(A)\cap\M^{\omega\heq}$.  The following observations are easy exercises.

  \begin{fact}\label{fact:bom}$~$
  \begin{enumerate}[$(a)$]
  \item If $A\seq\M^{\heq}$ then $\bdd(A)=\dcl^{\heq}(\bdd^\omega(A))$.
  \item If $A\seq\M^{\heq}$ and $\sigma\in\Aut(\M)$, then $\sigma(\bdd^\omega(A))=\bdd^\omega(\sigma(A))$.
  \item If $A,B,C\subset\M^{\heq}$ then $A\indb_C B$ $\Leftrightarrow$ $\bdd^\omega(AC)\cap\bdd^\omega(BC)=\bdd^\omega(BC)$.
  \end{enumerate}
  \end{fact}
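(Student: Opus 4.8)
The plan is to prove Fact~\ref{fact:bom} by unwinding the definitions and leaning on Fact~\ref{fact:countableEQ}, which already does the main technical work. For part~$(a)$, the inclusion $\dcl^{\heq}(\bdd^\omega(A))\seq\bdd(A)$ is immediate since $\bdd^\omega(A)\seq\bdd(A)$ and $\bdd(A)$ is closed under $\dcl^{\heq}$ (being a $\bdd$-closure). For the reverse inclusion, take any $e\in\bdd(A)$; write $e$ as the class $a/E$ of a real (or hyperimaginary) tuple $a$ under a type-definable equivalence relation $E$, and let $(E_i)_{i<\lambda}$ enumerate the countably type-definable equivalence relations implied by $E$. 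Each $a/E_i$ lies in $\M^{\omega\heq}$, and each $a/E_i$ has a bounded orbit over $A$ because $e=a/E$ does and $a/E_i$ is a function of $a/E$; hence $a/E_i\in\bdd^\omega(A)$. By Fact~\ref{fact:countableEQ}, $a/E$ is determined by the sequence $(a/E_i)_{i<\lambda}$ (two tuples are $E$-equivalent iff they agree modulo every $E_i$), so $e\in\dcl^{\heq}((a/E_i)_{i<\lambda})\seq\dcl^{\heq}(\bdd^\omega(A))$.

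Part~$(b)$ is a routine consequence of the fact that automorphisms of $\M$ act on $\M^{\heq}$ preserving $\bdd$, type-definability, and countability of equivalence relations: $\sigma$ permutes $\M^{\omega\heq}$ (it maps a class $a/E$ to $\sigma(a)/E$, and $E$ stays the same type-definable equivalence relation), and $\sigma(\bdd(A))=\bdd(\sigma(A))$; intersecting both sides of this with $\M^{\omega\heq}$ and using that $\sigma$ preserves that set gives $\sigma(\bdd^\omega(A))=\bdd^\omega(\sigma(A))$.

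For part~$(c)$, start from the definition $A\indb_C B\iff\bdd(AC)\cap\bdd(BC)=\bdd(C)$. Applying $\bdd^\omega(\cdot)=\bdd(\cdot)\cap\M^{\omega\heq}$ and intersecting the displayed equality with $\M^{\omega\heq}$ gives the forward direction: $A\indb_C B$ implies $\bdd^\omega(AC)\cap\bdd^\omega(BC)=\bdd^\omega(C)$ (note the statement as written has a typo, ``$=\bdd^\omega(BC)$'' should be ``$=\bdd^\omega(C)$''; I will state it correctly). Conversely, suppose $\bdd^\omega(AC)\cap\bdd^\omega(BC)=\bdd^\omega(C)$. To recover the full equality, use part~$(a)$: $\bdd(AC)\cap\bdd(BC)=\dcl^{\heq}(\bdd^\omega(AC))\cap\dcl^{\heq}(\bdd^\omega(BC))$, and any $e$ in this intersection is interdefinable with a countable sequence of hyperimaginaries each of which, being definable over both $\bdd^\omega(AC)$ and $\bdd^\omega(BC)$ and having bounded orbit, lies in $\bdd^\omega(AC)\cap\bdd^\omega(BC)=\bdd^\omega(C)$; hence $e\in\dcl^{\heq}(\bdd^\omega(C))=\bdd(C)$.

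The only mildly delicate point is the step in parts~$(a)$ and~$(c)$ where one passes from ``$e$ has a bounded orbit over $A$'' to ``each countable component $a/E_i$ of $e$ has a bounded orbit over $A$, hence lies in $\bdd^\omega(A)$'': this needs that a $\dcl^{\heq}$-image of a hyperimaginary with bounded orbit again has bounded orbit, which is standard (the orbit of the image is a quotient of the orbit of the original), together with the observation that $a/E_i$ is genuinely $\dcl^{\heq}$ over $a/E$ since $E\vdash E_i$. Everything else is bookkeeping with closures and the already-granted Fact~\ref{fact:countableEQ}, so I expect no real obstacle; the main thing to be careful about is keeping straight which closure operator ($\bdd$, $\bdd^\omega$, $\dcl^{\heq}$) is being applied at each stage.
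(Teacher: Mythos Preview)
Your proof is correct. The paper does not give a proof of this fact at all; it simply states that ``the following observations are easy exercises,'' so there is no argument to compare yours against. Your approach---reducing everything to Fact~\ref{fact:countableEQ} by decomposing an arbitrary hyperimaginary $a/E$ into its countable components $a/E_i$, and observing that each component inherits a bounded orbit from $a/E$---is exactly the natural way to fill in the exercise, and the details for all three parts are handled correctly.

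You are also right that part~$(c)$ as printed contains a typo: the condition should read $\bdd^\omega(AC)\cap\bdd^\omega(BC)=\bdd^\omega(C)$, not $=\bdd^\omega(BC)$. This is confirmed by the way the fact is used in the proof of Corollary~\ref{cor:omegadown}, where the relevant equality is explicitly $\bdd^\omega(A'C)\cap\bdd^\omega(BC)=\bdd^\omega(C)$.
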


  One can also show that if $A\subset\M^{\heq}$ is small, then so is $\bdd^\omega(A)$ (see  the proof of \cite[Proposition 15.18]{Casanovas}).\footnote{In fact, Hart, Kim, and Pillay \cite{HKP} \emph{define} $\bdd(A)$  to be $\bdd^\omega(A)$ in order to ensure that the bounded closure of a small set is small.} Altogether, the question of full existence for $\indb$ in $\M^{\heq}$ reduces to an analogous question in $\M^{\omega\heq}$.

\begin{corollary}\label{cor:omegadown}
$\indb$ satisfies full existence if and only if for all $A,B,C\subset\M^{\omega\heq}$ there is some $A'\equiv_C A$ such that $\bdd^\omega(A'C)\cap\bdd^\omega(BC)=\bdd^\omega(C)$. 
\end{corollary}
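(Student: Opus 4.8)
\textbf{Proof plan for Corollary \ref{cor:omegadown}.}
The plan is to prove both implications by reducing statements about $\M^{\heq}$ to statements about $\M^{\omega\heq}$ using Fact \ref{fact:bom}. The backward direction is the substantive one; the forward direction should be essentially immediate. For the forward direction, assume $\indb$ satisfies full existence and fix $A,B,C\subset\M^{\omega\heq}$. Since $\M^{\omega\heq}\seq\M^{\heq}$, full existence gives $A'\equiv_C A$ with $A'\indb_C B$, and by Fact \ref{fact:bom}(c) (combined with the observation that $A,B,C$ already lie in $\M^{\omega\heq}$) this rewrites as $\bdd^\omega(A'C)\cap\bdd^\omega(BC)=\bdd^\omega(C)$, as desired.

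For the backward direction, assume the displayed property holds for all tuples from $\M^{\omega\heq}$, and fix arbitrary small $A,B,C\subset\M^{\heq}$; we must produce $A'\equiv_C A$ with $A'\indb_C B$. The first step is to replace each of $A$, $B$, $C$ by an interdefinable tuple from $\M^{\omega\heq}$: set $\tilde A=\bdd^\omega(AC)$, $\tilde B=\bdd^\omega(BC)$, $\tilde C=\bdd^\omega(C)$, which are small by the cited result from \cite{Casanovas}, and which lie in $\M^{\omega\heq}$ by construction. By Fact \ref{fact:bom}(a), $\tilde A C$ and $AC$ have the same definable (hence bounded) closure, and similarly for the others, so in particular $\tilde A\tilde C\equiv_C \tilde A\tilde C$ and the hypothesis applied to $(\tilde A,\tilde B,\tilde C)$ yields some $A''\equiv_{\tilde C}\tilde A$ (equivalently $A''\equiv_C\tilde A$, since $\tilde C$ and $C$ are interdefinable) with $\bdd^\omega(A''\tilde C)\cap\bdd^\omega(\tilde B\tilde C)=\bdd^\omega(\tilde C)$. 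The second step is to transport this back: since $A''\equiv_C\tilde A$ and $\tilde A\in\dcl^{\heq}(A)$ (and conversely $A\in\dcl^{\heq}(\tilde A)$ by Fact \ref{fact:bom}(a)), there is an automorphism $\sigma\in\Aut(\M/C)$ moving $\tilde A$ to $A''$; set $A'=\sigma(A)$, so $A'\equiv_C A$ and $\bdd^\omega(A'C)=\sigma(\bdd^\omega(AC))=\sigma(\tilde A)$'s bounded-closure data, which by Fact \ref{fact:bom}(b) equals $\bdd^\omega(A''C)$. Then, using $\bdd^\omega(A'C)=\bdd^\omega(A''\tilde C)$, $\bdd^\omega(BC)=\tilde B$'s data $=\bdd^\omega(\tilde B\tilde C)$, and $\bdd^\omega(C)=\bdd^\omega(\tilde C)$, the separation statement for $A''$ becomes exactly $\bdd^\omega(A'C)\cap\bdd^\omega(BC)=\bdd^\omega(C)$, and Fact \ref{fact:bom}(c) converts this to $A'\indb_C B$.

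The main obstacle I anticipate is bookkeeping rather than mathematical depth: one must be careful that passing from $A$ to $\tilde A=\bdd^\omega(AC)$ genuinely preserves everything needed — namely that $\equiv_C$ is unchanged (because $A$ and $\tilde A$ are interdefinable over $C$ via Fact \ref{fact:bom}(a)), that $\bdd^\omega(\tilde A\tilde C)=\bdd^\omega(AC)$ (so the separation conclusion is about the right object), and that the automorphism witnessing $A''\equiv_C\tilde A$ can be chosen in $\Aut(\M/C)$ and then applied to the original $A$. A secondary point to handle cleanly is that $\bdd^\omega(BC)$ and $\bdd^\omega(C)$ are genuinely fixed setwise by the relevant automorphism $\sigma$ (it fixes $C$, hence $\bdd^\omega(C)$ pointwise up to the equivalence, and it fixes $B$ only if we arrange $\sigma\in\Aut(\M/BC)$ — but here we only need $\sigma\in\Aut(\M/C)$ and the term $\bdd^\omega(BC)$ need not be fixed; what we actually use is that $A''\equiv_{\tilde B\tilde C}$-data is irrelevant and only the intersection with $\bdd^\omega(\tilde B\tilde C)$ matters, which is handled by conjugating the whole separation equation by $\sigma^{-1}$ if one prefers). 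Modulo these routine verifications with Fact \ref{fact:bom}, the corollary follows.
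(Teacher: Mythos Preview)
Your approach is correct and essentially the same as the paper's: the forward direction is immediate from Fact~\ref{fact:bom}$(c)$, and for the backward direction you (like the paper) replace $A,B,C$ by $\bdd^\omega(AC),\bdd^\omega(BC),\bdd^\omega(C)$, apply the hypothesis, then pull back via an automorphism $\sigma\in\Aut(\M/\bdd^\omega(C))\subseteq\Aut(\M/C)$ to obtain $A'=\sigma(A)$. Your closing worry about whether $\sigma$ fixes $\bdd^\omega(BC)$ is a red herring (and you correctly note this): no such invariance is needed, since $\bdd^\omega(A'C)=\bdd^\omega(A''\tilde C)$ already gives the desired intersection directly.
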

\begin{proof}
The forward implication is trivial.  
So assume the latter condition, and fix $A,B,C\subset\M^{\heq}$. By assumption, there is some $A^*\equiv_{\bdd^\omega(C)} \bdd^\omega(AC)$ such that $\bdd^\omega(A^*)\cap \bdd^\omega(BC)=\bdd^\omega(C)$.  Fix $\sigma\in \Aut(\M/\bdd^\omega(C))$ such that $A^*=\sigma(\bdd^\omega(AC))$. Then $\sigma\in\Aut(\M/C)$ by Fact \ref{fact:bom}$(a)$, and so if $A'\coloneqq \sigma(A)$ then $A'\equiv_C A$. Note also that $\bdd^\omega(A^*)=\bdd^\omega(A'C)$
by Fact \ref{fact:bom}$(b)$. So 
\[
\bdd^\omega(A'C)\cap \bdd^\omega(BC)=\bdd^\omega(A^*)\cap \bdd^\omega(BC)=\bdd^\omega(C),
\]
and thus $A'\ind^b_C B$ by Fact \ref{fact:bom}$(c)$. 
\end{proof}

In order to prove full existence for $\indb$, we will construct a  continuous theory $T^*$ having the property that the latter condition of the previous corollary is equivalent to full existence for $\inda$ in $T^*$.  In particular, $T^*$ will be a certain expansion of $T$ by continuous imaginaries. 
 
 Given a  function $f$ on $S_x(T)$ and a model $M\models T$, let $f^{M}$ be the  function on $M^x$ given by $f^{M}(a)\coloneqq f(\tp(a))$ for $a\in M^x$. When $M$ is the monster model $\M$, and there is no possibility of confusion, we omit the superscript and write $f(a)$.

 \begin{definition}
 Let $x$ and $y$ be tuples of variables of the same sort.
 A \textbf{definable pseudo-metric in $x$} is a continuous function $\rho\colon S_{xy}(T)\to\R$ such that for any $M\models T$, $\rho^{M}$ is a pseudometric on $M^x$.
 \end{definition}
 
Next we revisit the construction of definable metrics for hyperimaginary sorts. 
 This is a special case of  \cite[Theorem 2.20]{BCat}, which is stated in the context of compact abstract theories (see \cite[Section 3.1]{BYdgs} for a translation to metric structures). The proof we present here is a simplified version of a proof originally presented in the second author's thesis \cite[Lemma 3.4.4]{Hanson-thesis}.

 \begin{proposition}[Ben Yaacov \cite{BCat,BYdgs}]\label{prop:pseudo-equiv}
 Let $E(x,y)$ be a countably type-definable equivalence relation. Then there is a definable pseudo-metric $\rho$ in $x$ such that for any $a,b\in\M^x$, $E(a,b)$ if and only if $\rho(ab)=0$.
 \end{proposition}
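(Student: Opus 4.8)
The plan is to build $\rho$ by averaging a countable family of bounded pseudo-metrics, one for each type-definable condition cutting out $E$. Write $E(x,y) = \bigwedge_{n<\omega} \theta_n(x,y)$, where each $\theta_n$ is a closed condition of the form ``$\varphi_n(x,y) \le 0$'' with $\varphi_n \colon S_{xy}(T) \to \R$ continuous and (by replacing $\varphi_n$ with $\max(0,\min(1,\varphi_n))$) valued in $[0,1]$; we may also assume the sequence is decreasing, so $\varphi_{n+1} \ge$ (a shift of) $\varphi_n$ in the sense that the conditions get stronger. The naive guess $\rho = \sum_n 2^{-n}\varphi_n$ is continuous on $S_{xy}(T)$ and vanishes exactly on $E$, but it need not be a pseudo-metric: symmetry can be arranged by symmetrizing each $\varphi_n$, but the triangle inequality will typically fail for the individual $\varphi_n$. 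So the real work is to pass from each $\varphi_n$ to an honest definable pseudo-metric $\rho_n$ with the property that $\rho_n(ab) = 0 \iff \theta_n$-closure issues behave correctly, and then set $\rho = \sum_n 2^{-n}\rho_n$.

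First I would make precise the reduction: it suffices to produce, for each $n$, a definable pseudo-metric $\rho_n$ in $x$, valued in $[0,1]$, such that $\{(a,b) : \rho_n(ab) = 0\}$ is exactly the finest type-definable equivalence relation implied by ``$\varphi_n(ab) = 0$''— or, more simply, such that $\bigcap_n \{\rho_n = 0\} = E$ while each $\rho_n = 0$ is $E$-coarse enough to be type-definable. Then $\rho = \sum_n 2^{-n}\rho_n$ is a uniform limit of continuous functions, hence continuous on $S_{xy}(T)$; it is a pseudo-metric on each $M^x$ because a convergent sum of pseudo-metrics is one; and $\rho(ab) = 0$ iff $\rho_n(ab) = 0$ for all $n$ iff $E(a,b)$. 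To get $\rho_n$ from $\varphi_n$, the standard trick is the ``path/chain'' construction: define
\[
\rho_n(ab) = \inf\Big\{ \sum_{k=0}^{m-1} \psi_n(c_k c_{k+1}) : m<\omega,\ c_0 = a,\ c_m = b \Big\},
\]
where $\psi_n$ is the symmetrized version of $\varphi_n$ (say $\psi_n(ab) = \max(\varphi_n(ab),\varphi_n(ba))$). This is automatically a pseudo-metric; the content is showing it is \emph{definable}, i.e.\ that $\rho_n$, viewed as a function of $\tp(ab)$, is continuous on $S_{xy}(T)$. Continuity of an infimum-over-chains is not free, so I expect to need a compactness/saturation argument: the value $\rho_n(ab) \le r$ is a closed condition because, by saturation of $\M$, the infimum over chains of all finite lengths is attained (or approximated) by chains of bounded length given a bound on $r$, and ``there exists a chain of length $m$ with total $\psi_n$-weight $< r$'' is an open condition on $\tp(ab)$ for fixed $m$. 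Dually $\rho_n(ab) \ge r$ is closed because it is an intersection of the complements of these open conditions over all $m$.

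The main obstacle, then, is establishing continuity (equivalently, that $\rho_n$ factors through $S_{xy}(T)$ as a continuous function), and in particular ruling out that the chain-infimum collapses to something that is not type-definable or that fails to separate $E$-classes from outside. For the latter I would check: if $E(a,b)$ fails, then some $\varphi_n(ab) > 0$, hence $\psi_n(ab) > 0$; but I need $\rho_n(ab) > 0$, which could a priori be destroyed by a clever chain through intermediate points — this is exactly where the hypothesis that $E$ is an \emph{equivalence relation} (so that the conditions $\theta_n$ can be chosen to already respect transitivity up to refinement, e.g.\ replacing $\varphi_n$ by an iterate controlling $2$-step and $3$-step chains) gets used, together with the fact that there are only countably many $\theta_n$ so one can diagonalize. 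I anticipate organizing this as: (i) choose the $\varphi_n$ carefully (decreasing, $[0,1]$-valued, symmetric, and ``pre-transitive'' in the sense that $\theta_{n+1}(x,z) \wedge \theta_{n+1}(z,y) \vdash \theta_n(x,y)$, which is possible since $E$ is transitive and type-definable by countably many conditions); (ii) define $\rho_n$ by chains as above; (iii) prove $\rho_n$ is a definable pseudo-metric via the saturation/compactness argument; (iv) check $\bigcap_n\{\rho_n = 0\} = E$ using (i); (v) set $\rho = \sum_n 2^{-n}\rho_n$ and verify it works. Step (iii) is the heart; everything else is bookkeeping with $\e$'s and the axioms of continuous logic.
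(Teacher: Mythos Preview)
Your strategy --- metrize the countable family of conditions via a chain/path infimum, then sum with weights $2^{-n}$ --- is the classical approach to metrizing a uniformity with countable base, and a version of it can be made to work here. But as written there is a genuine gap at the step you yourself flag as ``the heart.'' For a fixed $n$, the chain infimum
\[
\rho_n(ab)=\inf\Bigl\{\textstyle\sum_{k=0}^{m-1}\psi_n(c_kc_{k+1}):c_0=a,\ c_m=b\Bigr\}
\]
can collapse to $0$ identically: nothing in your hypotheses prevents $\psi_n$ from behaving like, say, the square of a metric, in which case chains of $m$ equally spaced points have total weight of order $1/m$. Your pre-transitivity condition $\theta_{n+1}(x,z)\wedge\theta_{n+1}(z,y)\vdash\theta_n(x,y)$ constrains only the \emph{zero sets} of the $\psi_n$, not their values, so it gives no lower bound on chain sums. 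What the chain-metrization lemma actually needs is a quantitative inequality (roughly $\psi_{n+1}(xz),\psi_{n+1}(zy)<\delta\Rightarrow\psi_n(xy)<2\delta$), and extracting this from the zero-set condition requires a separate compactness step you have not supplied; even then one does not build the $\rho_n$ separately but rather a single $\rho$ from all the $\psi_n$ at once. Your continuity argument has a matching hole: saturation realizes types, it does not bound the length of a witnessing chain, so the claim that ``$\rho_n(ab)\le r$ is closed'' is not justified.

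The paper takes a quite different route that avoids chains altogether. It forms the quotient $S_F(T)$ of $S_{xy}(T)$ by the closed equivalence relation induced by $F(xy,x'y')\equiv E(x,x')\wedge y=y'$, notes that the $E$-diagonal is a closed $G_\delta$ in that quotient, and picks (by Urysohn-type reasoning in a compact Hausdorff space) a continuous $F$-invariant $g\colon S_{xy}(T)\to[0,1]$ vanishing exactly on $E$. The pseudo-metric is then $\rho(ab)=\sup_z|g(az)-g(bz)|$. Symmetry and the triangle inequality are immediate from the sup-of-differences form; continuity comes from continuity of $g$ and openness of the restriction map $S_{xyz}(T)\to S_{xy}(T)$; and the equivalence $\rho(ab)=0\Leftrightarrow E(a,b)$ drops out by plugging in $z=b$ for one direction and using $F$-invariance of $g$ for the other. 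This sidesteps both the collapse and the continuity issues in your plan at the cost of one extra idea, the passage to the quotient type space $S_F(T)$.
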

 \begin{proof}
Let $F(xy,x'y')$ be the countably type-definable equivalence relation given by $E(x,x')\wedge y=y'$, where $x'$ and $y'$ are tuples of the same sort as $x$. Define a relation $\equiv_F$ on $S_{xy}(T)$ so that $\tp(ab)\equiv_F \tp(a'b')$ if and only if $ab_F\equiv a'b'_F$.
By basic facts on hyperimaginaries, $\equiv_F$ is a well-defined closed equivalence relation.  Consider the type space $S_F(T)$ corresponding to the quotient of $S_{xy}(T)$ by  $\equiv_F$, and let $\pi\colon S_{xy}(T)\to S_F(T)$ be the quotient map. By assumption on $E$, the set $G\coloneqq \{p\in S_{xy}(T):p\vdash E(x,y)\}$ is a closed $G_\delta$ set in $S_{xy}(T)$. It is  easy to check that $G$ is $\pi$-invariant, and thus $\pi(G)$ is a closed $G_\delta$ set in $S_F(T)$. Since $S_F(T)$ is compact Hausdorff, there is a continuous function $f\colon S_F(T)\to [0,1]$  such that $f(p)=0$ if and only if $p\in \pi(G)$. So $g\coloneqq f\circ\pi\colon S_{xy}(T)\to [0,1]$ is a continuous $\pi$-invariant function with the property that $g(p)=0$ if and only if $p\in\pi\inv(\pi(G))=G$. In other words, $g(\tp(ab))=0$ if and only if $E(a,b)$.   

Now define $\rho\colon S_{xy}(T)\to [0,1]$ such that for $a,b\in\M^x$,
\[
\rho(\tp(ab))\coloneqq \textstyle\sup_z|g(az)-g(bz)|.
\]
Then $\rho$ is well-defined, and it is easy to check that $\rho$ is a pseudo-metric. Using  basic point-set topology, continuity of $g$, and the fact that restriction  from $S_{xyz}(T)$ to $S_{xy}(T)$ is open, one can  show that $\rho$ is continuous. So to finish the proof, it suffices to show that for any $a,b\in\M^x$, $\rho(ab)=0$ if and only if $E(a,b)$ holds.

First suppose $\rho(ab)=0$. This implies $|g(ab)-g(bb)|=0$. Note that $g(bb)=0$ since $E(b,b)$ holds. So $g(ab)=0$, and thus $E(a,b)$ holds. 

Conversely, suppose $E(a,b)$ holds. Then $F(ac,bc)$ holds for all $c\in\M^x$, which implies $\pi(ac)=\pi(bc)$ for all $c\in\M^x$. Therefore we have $g(ac)=g(bc)$ for all $c\in\M^x$ since $g$ is $\pi$-invariant, which implies that $\rho(ab)=0$. 
\end{proof}

Now we define the theory $T^*$ alluded to above.
For each countably type-definable equivalence relation $E(x,y)$, choose a definable pseudometric $\rho_E$ as in Proposition \ref{prop:pseudo-equiv}. Let $T^*$ be the continuous theory that consists of $T$ together with sorts added for quotients by each $\rho_E$. When $x$ is a finite tuple, this construction is described in \cite[Section 11]{BBHU} and \cite[Section 5]{BYU}. For $\omega$-ary product sorts, the construction is more delicate. Some brief remarks on this issue are given in \cite{BYU}, and complete details are provided  in the second author's thesis \cite[Chapter 3]{Hanson-thesis}. In any case, each pseudo-metric $\rho_E$ induces a metric $d_E$ on $\M^x/E$. Using saturation of $\M$ and definability of $\rho_E$, it follows that $\M^x/E$ is a complete metric space with respect to $d_E$. Altogether, this yields a canonical expansion of $\M^{\omega\heq}$ to a monster model of $T^*$ in which the new sort for  $\rho_E$ is interpreted as $(\M^x/E,d_E)$. We now have everything needed to obtain a positive answer to Question \ref{ques2}.
 
\begin{corollary}\label{cor:indb}
In any discrete or continuous theory $T$, $\indb$ satisfies full existence (and thus also satisfies extension).
\end{corollary}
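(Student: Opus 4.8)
\textbf{Proof plan for Corollary \ref{cor:indb}.}
The plan is to reduce full existence for $\indb$ in $\M^{\heq}$ to full existence for $\inda$ in the monster model of $T^*$, which is already known by Theorem \ref{thm:a-exist}. By Corollary \ref{cor:omegadown}, it suffices to prove that for all $A,B,C\subset\M^{\omega\heq}$ there is some $A'\equiv_C A$ with $\bdd^\omega(A'C)\cap\bdd^\omega(BC)=\bdd^\omega(C)$. The key observation is that $\M^{\omega\heq}$ is exactly (the relevant reduct of) a monster model $\M^*$ of the continuous theory $T^*$ constructed above: each element of $\M^{\omega\heq}$ lives in some sort $\M^x/E$ which is one of the new metric sorts of $T^*$, and conversely each such sort is a complete metric space whose points are countable hyperimaginaries.

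First I would verify the translation between the two settings. Since $T^*$ is an expansion of $T$ by imaginary sorts, an automorphism of $\M$ canonically extends to an automorphism of $\M^*$, and conversely every automorphism of $\M^*$ restricts to one of $\M$; hence for $A,C\subset\M^{\omega\heq}$ we have $A\equiv_C^{\M} A'$ in the hyperimaginary sense if and only if $A\equiv_C^{T^*} A'$ in $\M^*$. Next I would identify bounded closure in $\M^{\heq}$ with algebraic closure in $T^*$: for a countable hyperimaginary $a$ and a small set $C$, $a\in\bdd(C)$ iff the $\Aut(\M/C)$-orbit of $a$ has bounded cardinality iff (viewing $a$ as an element of the appropriate metric sort of $\M^*$) the $\Aut(\M^*/C)$-orbit of $a$ has bounded cardinality, which by Fact \ref{fact:acldef} is equivalent to $a\in\acl^{T^*}(C)$. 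Thus $\bdd^\omega(D)$ for $D\subset\M^{\omega\heq}$ coincides with $\acl^{T^*}(D)$ (intersected with the $\omega\heq$ sorts, but that is all of $\M^*$ up to the original real sorts, which are themselves eliminable as countable hyperimaginaries via equality). Care is needed here because $\M^*$ may fail to be $\kappa$-saturated for the $\kappa$ used for $\M$; the standard fix is to first pass to a sufficiently saturated elementary extension of $\M^*$, or equivalently to have chosen $\M$ large enough at the outset so that $\M^*$ is itself a monster model of $T^*$ — this is exactly what the paragraph preceding the corollary arranges.

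With these identifications in place, the condition in Corollary \ref{cor:omegadown} becomes: for all small $A,B,C\subset\M^*$ there is $A'\equiv_C A$ with $\acl^{T^*}(A'C)\cap\acl^{T^*}(BC)=\acl^{T^*}(C)$, i.e.\ $A'\inda_C B$ in $T^*$. This is precisely full existence for $\inda$ in the continuous theory $T^*$, which holds by Theorem \ref{thm:a-exist}. Therefore $\indb$ satisfies full existence; that it also satisfies extension then follows from Fact \ref{fact:easy}(b), since $\indb$ is easily checked to satisfy right transitivity (as $\bdd$ closures, like $\acl$ closures, are transitive).

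I expect the main obstacle to be purely bookkeeping rather than conceptual: carefully matching up the automorphism groups and the various closure operators across the three objects $\M^{\heq}$, $\M^{\omega\heq}$, and $\M^*$, and in particular making sure the saturation and strong homogeneity hypotheses transfer so that $\M^*$ genuinely serves as a monster model of $T^*$. Once the dictionary ``$\bdd^\omega$ on $\M^{\omega\heq}$ $=$ $\acl^{T^*}$ on $\M^*$'' and ``$\equiv_C$ is the same notion on both sides'' is established, the corollary is immediate from Corollary \ref{cor:omegadown} and Theorem \ref{thm:a-exist}.
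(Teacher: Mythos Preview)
Your proposal is correct and follows essentially the same approach as the paper: reduce via Corollary~\ref{cor:omegadown} to $\M^{\omega\heq}$, identify $\bdd^\omega$ with $\acl$ in $T^*$ using Fact~\ref{fact:acldef}, invoke Theorem~\ref{thm:a-exist}, and then obtain extension from right transitivity via Fact~\ref{fact:easy}$(b)$. The paper's proof is more terse and leaves the automorphism-matching and saturation bookkeeping implicit (having set up $\M^{\omega\heq}$ as a monster model of $T^*$ in the preceding paragraph), but your more explicit dictionary is exactly what underlies the argument.
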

\begin{proof}
By Fact \ref{fact:acldef}, we have $\bdd^\omega(A)=\acl^*(A)$ for any $A\subset\M^{\omega\heq}$. Therefore full existence for $\indb$  follows from Corollary \ref{cor:omegadown} and full existence for $\inda$ in $T^*$, which holds by Theorem \ref{thm:a-exist}. Since $\indb$ satisfies right transitivity, we also get extension  from Fact \ref{fact:easy}$(b)$.
\end{proof}

A pithy summary of this section is that  if $T$ is continuous then Fact \ref{fact:countableEQ} and Proposition \ref{prop:pseudo-equiv} together justify the assertion that $T$ eliminates hyperimaginaries \emph{in the sense of continuous logic}, and so $\bdd$ is controlled by $\acl^{\eq}$.

As a counterpoint to the previous statement, we also note that our use of continuous imaginaries to prove Corollary \ref{cor:indb} is in some sense a formality. Indeed, one could instead work directly with the (abstract) metric space structure induced on $\M^{\omega\heq}$ by Proposition \ref{prop:pseudo-equiv}. Full existence for $\indb$ then follows from Corollary \ref{cor:omegadown} together with an application of Lemma \ref{lem:PMc1} analogous to the proof of Theorem \ref{thm:a-exist}. One would only need to directly check that if $C\subset\M^{\omega\heq}$ then $a\in\bdd^\omega(C)$ if and only if the $\Aut(\M/C)$-orbit of $a$ is totally bounded with respect to the metric on $\M^{\omega\heq}$. But this follows from saturation of $\M$ and definability of the metric.

\subsection{Dividing independence}

In this section, we use the metric version of P. M. Neumann's Lemma to prove that dividing independence implies algebraic independence in continuous logic. A proof of this result was claimed in the original version of \cite{CoTeCL}, but the argument relied on (a continuous adaptation of) the erroneous proof of \cite[Remark 5.4(3)]{Adgeo}. 

Let $T$ be a continuous theory with monster model $\M$.
 Given tuples $\abar$ and $\bbar$ from $\M$, and a set $C\subset\M$, we write $\abar\indd_C \bbar$ (and say \textbf{$\tp(\abar/\bbar C)$ does not divide over $C$}) if for any $C$-indiscernible sequence $(\bbar_i)_{i<\omega}$ such that $\bbar_0=\bbar$, there is some $\abar^*$ such that $\abar^*\bbar_i\equiv_C \abar\bbar$ for all $i<\omega$. If $A$ and $B$ are sets then we write $A\indd_C B$ if $\abar\indd_C \bbar$ holds for some/any tuples $\abar$ and $\bbar$ enumerating $A$ and $B$.

We will make use of the following equivalent characterization of dividing. The discrete analogue is a standard fact (see, e.g., \cite[Corollary 7.1.5, Exercise 7.11]{TeZi}), and the adaptation to continuous logic is routine (see, e.g., \cite{CoTeCL}).

\begin{fact}\label{fact:kdiv} 
Given $C\subset\M$ and tuples $\abar,\bbar$ from $\M$,  $\abar\indd_C \bbar$ if and only if for any $C$-indiscernible sequence $(\bbar_i)_{i<\omega}$, with $\bbar_0\seq \bbar$, there is an $\acl(\abar C)$-indiscernible sequence $(\bbar'_i)_{i<\omega}$ such that $(\bbar'_i)_{i<\omega}\equiv_{\bbar_0 C}(\bbar_i)_{i<\omega}$.
\end{fact}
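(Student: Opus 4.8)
The plan is to prove the two implications separately; the forward one (from $\abar\indd_C\bbar$ to the sequence condition) carries the weight. For the easy direction, suppose the sequence condition holds and let $(\bbar_i)_{i<\omega}$ be $C$-indiscernible with $\bbar_0=\bbar$ (the special case of $\bbar_0\seq\bbar$ with equality). The hypothesis yields an $\acl(\abar C)$-indiscernible $(\bbar'_i)_{i<\omega}$ and $\sigma\in\Aut(\M/\bbar C)$ with $\sigma(\bbar_i)=\bbar'_i$ for all $i$. Since $(\bbar'_i)_{i<\omega}$ is in particular $\abar C$-indiscernible and $\bbar'_0=\bbar$, we get $\abar\bbar'_i\equiv_C\abar\bbar$ for every $i$, so $\abar^{*}:=\sigma\inv(\abar)$ satisfies $\abar^{*}\bbar_i=\sigma\inv(\abar\bbar'_i)\equiv_C\abar\bbar$ (using $\sigma\inv\in\Aut(\M/C)$), which is exactly the witness demanded by the definition of $\abar\indd_C\bbar$.

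For the hard direction, assume $\abar\indd_C\bbar$ and make two harmless reductions. By right monotonicity of $\indd$ we have $\abar\indd_C\bbar_0$, so it suffices to handle the case $\bbar_0=\bbar$; and by the left-$\acl$ closure property of dividing independence (that $\abar\indd_C\bbar$ implies $\bar A\indd_C\bbar$ whenever $\bar A$ enumerates $\acl(\abar C)$ --- a standard fact, via a pigeonhole argument on the solutions of an algebraic formula), I may replace $\abar$ by such a $\bar A$, so that ``$\acl(\abar C)$-indiscernible'' just means indiscernible over the parameter set $\bar A$ and $\bar A\indd_C\bbar$ holds.

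Now extend $(\bbar_i)_{i<\omega}$ to a $C$-indiscernible sequence $(\bbar_i)_{i\in I}$ with $\bbar_0=\bbar$ and $I$ large (enough for the extraction below). Since $\tp(\bar A/\bbar C)$ does not divide over $C$, a compactness argument gives $\bar A^{*}$ with $\bar A^{*}\bbar_i\equiv_C\bar A\bbar$ for all $i\in I$; as $\bar A^{*}\equiv_{\bbar C}\bar A$, applying an automorphism over $\bbar C$ (which replaces $(\bbar_i)_{i\in I}$ by a $\bbar C$-conjugate, so that proving the conclusion for the replacement suffices) lets me assume $\bar A\bbar_i\equiv_C\bar A\bbar$, hence $\bbar_i\equiv_{\bar A}\bbar$, for all $i\in I$. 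Apply the standard extraction of indiscernibles (Erd\H{o}s--Rado, or its approximate/modeling form in continuous logic) to get an $\bar A$-indiscernible sequence $(\bbar''_i)_{i<\omega}$ based on $(\bbar_i)_{i\in I}$ over $\bar A$. Restricting to $C\seq\bar A$ and using $C$-indiscernibility of $(\bbar_i)_{i\in I}$, the new sequence is $C$-indiscernible with the same $C$-EM-type as $(\bbar_i)_{i<\omega}$; restricting $1$-types to $\bar A$ and using $\bbar_i\equiv_{\bar A}\bbar$, every $\bbar''_i\equiv_{\bar A}\bbar$. Since $\bbar''_0\equiv_{\bar A}\bbar$, an automorphism over $\bar A$ arranges $\bbar''_0=\bbar$ without disturbing $\bar A$-indiscernibility or the $C$-EM-type. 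Then $(\bbar''_i)_{i<\omega}$ is $\acl(\abar C)$-indiscernible; and being $C$-indiscernible with the same $C$-EM-type as $(\bbar_i)_{i<\omega}$ and agreeing with it at position $0$, any $C$-automorphism carrying $(\bbar_i)_{i<\omega}$ to $(\bbar''_i)_{i<\omega}$ fixes $\bbar$, so $(\bbar''_i)_{i<\omega}\equiv_{\bbar C}(\bbar_i)_{i<\omega}$, as required.

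The main obstacle is the usual one for dividing arguments: arranging that the extracted sequence is conjugate to $(\bbar_i)_{i<\omega}$ over $\bbar C$, not merely over $C$, which forces one to control the distinguished coordinate $\bbar_0=\bbar$ throughout. The device above --- extract a fresh $\bar A$-indiscernible sequence, then conjugate its zeroth term to $\bbar$ using the normalization $\bbar_i\equiv_{\bar A}\bbar$ --- is the crux, and it is precisely why one first passes to $\bar A=\acl(\abar C)$, so that this normalization is available over the whole closure rather than only over $\abar C$. The remaining ingredients (long indiscernible extensions, the extraction lemma, right monotonicity and left-$\acl$ closure of $\indd$) are standard.
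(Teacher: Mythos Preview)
The paper does not actually prove this statement: it is recorded as a \emph{Fact} with the remark that the discrete version is standard (citing Tent--Ziegler) and that the continuous adaptation is routine (citing the unpublished notes \cite{CoTeCL}). So there is no proof in the paper to compare against; your argument is a correct fleshing-out of exactly the standard proof those references point to---extend the indiscernible sequence, realize the type along it, conjugate to fix $\bbar_0$, then extract over the larger base via Erd\H{o}s--Rado and re-anchor the zeroth term.

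One small comment: your reduction (2), replacing $\abar$ by an enumeration $\bar A$ of $\acl(\abar C)$ and invoking left-$\acl$ closure of $\indd$, is not needed. It is a general fact (with the same proof in continuous logic) that any $D$-indiscernible sequence is automatically $\acl(D)$-indiscernible: extract an $\acl(D)$-indiscernible sequence based on it over $\acl(D)$; this has the same type over $D$, and the $D$-automorphism carrying one to the other permutes $\acl(D)$ setwise, so the original sequence is $\acl(D)$-indiscernible too. Using this, you can run your extraction over $\abar C$ directly and conclude $\acl(\abar C)$-indiscernibility for free, avoiding the separate appeal to left-$\acl$ closure (whose ``pigeonhole on solutions of an algebraic formula'' justification is genuinely more delicate in the continuous setting, where algebraic sets are compact rather than finite). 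This does not affect correctness---left-$\acl$ closure is true---but it streamlines the argument.
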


We now prove the main result of this section.

\begin{theorem}\label{thm:dtoa}
Given $A,B,C\subset\M$, if $A\indd_C B$ then $A\inda_C B$.
\end{theorem}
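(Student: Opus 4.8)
The plan is to mirror the discrete proof of Proposition \ref{prop:dadisc}, replacing P. M. Neumann's Lemma with its metric analogue, Lemma \ref{lem:PMc} (the version about uniform separation of compact sets, which is the right tool since $\Aut(\M/C)$-orbits of algebraic points are compact by Fact \ref{fact:acldef}). First I would prove the contrapositive: assume $A\ninda_C B$ and produce a $C$-indiscernible sequence witnessing $A\nindd_C B$. So fix $a\in(\acl(AC)\cap\acl(BC))\setminus\acl(C)$, and work toward building a $C$-indiscernible sequence $(\bbar_i)_{i<\omega}$ with $\bbar_0$ a tuple from $B$ such that no single $\abar^*$ (where $\abar$ enumerates the relevant piece of $\acl(\abar_0 C)$ realizing $a$'s type over $BC$) can be sent consistently along the sequence; by Fact \ref{fact:kdiv} it is in fact cleaner to aim for the failure of the $\acl(\abar C)$-indiscernible-sequence condition.

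Here is the core construction. Let $\abar$ be a (real) tuple from $\M$ such that $a\in\acl(\abar C)$, $\abar\in\acl(BC)$ enumerates something like the compact orbit of a finite approximation, and more precisely pick a compact $C$-definable set $D_a\ni a$ with $a\notin\acl(C)$. Since $a\notin\acl(C)$, by Fact \ref{fact:acldef} the $\Aut(\M/C)$-orbit of $a$ is not totally bounded, so it is not totally bounded over $C$ even inside the compact set it lies in; more to the point, the compact set $K = \acl(\abar C)$-relevant-piece has the property that no point of it (in particular the coordinate carrying $a$) has a totally bounded $\Aut(\M/C)$-orbit — this needs a small argument, essentially that $a$ is a nonalgebraic point and the other coordinates can be chosen nonalgebraic too, or one restricts attention to a compact set all of whose points are nonalgebraic over $C$. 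Apply Lemma \ref{lem:PMc} to $G = \Aut(\M/C)$ and this compact set $K$: we get $\epsilon>0$ such that every $\epsilon$-bounded set can be moved by some $g\in G$ to be $\epsilon$-far from $K$. Then, exactly as in the discrete proof, iterate: having produced $gK$, $g'K,\ldots$ pairwise $\epsilon$-separated and all $\equiv_C$-conjugate to $K$, the union of finitely many of them is $\epsilon'$-bounded for suitable $\epsilon'$ (since each is compact hence totally bounded), so by a further application of Lemma \ref{lem:PMc} we can append another conjugate $\epsilon$-separated from all previous ones. This yields an infinite sequence $(K_i)_{i<\omega}$ of pairwise $(\geq\epsilon)$-separated $\Aut(\M/C)$-conjugates of $K$. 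Transferring the $B$-parameters along (choose $\bbar_i$ with $K_i\bbar_i\equiv_C K\bbar$, where $\bbar = \bbar_0$ enumerates $B$ or the relevant finite part), and passing to a $C$-indiscernible subsequence via Ramsey/compactness, we get a $C$-indiscernible $(\bbar_i)$ with the compact sets $\acl(\abar_i C)$-pieces mutually $\geq\epsilon$-separated — which obstructs the existence of a single $\abar^*$ realizing $\tp(\abar/\bbar C)$ along the whole sequence (any such $\abar^*$ would lie within distance $0$, hence within any $\epsilon/2$, of each $K_i$, contradicting separation). Then $\abar\nindd_C\bbar$, and by the basic dividing axioms (Fact \ref{fact:kdiv} and monotonicity) $A\nindd_C B$.

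The details I would want to be careful about, and which I expect to be the main obstacle, are: (1) choosing the right compact set $K$ so that \emph{every} point of it has non-totally-bounded $\Aut(\M/C)$-orbit — one cannot just take $K$ to be an arbitrary compact algebraic set, since e.g. it might contain algebraic-over-$C$ points; the fix is to take $K$ to be the $\Aut(\M/C)$-orbit closure of a single well-chosen tuple, or to take $a$ alone (a single point has orbit equal to some compact set all of whose points are automorphic to $a$, hence all non-algebraic), and then carry the $B$-parameters as extra data; (2) keeping track of indiscernibility — after extracting a $C$-indiscernible sequence the separation between the compact sets is preserved because separation by $\geq\epsilon$ is a (closed) condition expressible on pairs $(\bbar_i,\bbar_j)$ via $\inf$-formulas over $C$, so it passes to the indiscernible extraction; (3) verifying that $\geq\epsilon$-separation of the sets $\acl(\abar_i C)$ genuinely prevents a common realization — here one uses that if $\abar^*\bbar_i\equiv_C\abar\bbar$ then $\abar^*\in\acl(\abar^* C)\subseteq$ (the conjugate corresponding to) $K_i$, so $d(\abar^*, K_j)=0$ for all $i,j$, impossible once $i\neq j$. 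Modulo these points the argument is a faithful metric translation of Proposition \ref{prop:dadisc}.
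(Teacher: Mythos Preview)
Your proposal is correct and follows essentially the same strategy as the paper's proof. The paper resolves your concern~(1) cleanly by taking the compact set to be the $\Aut(\M/BC)$-orbit of $a$ (denoted $\abar$), which is compact since $a\in\acl(BC)$ and consists entirely of $\Aut(\M/BC)$-conjugates of $a$, hence of points outside $\acl(C)$; note your suggested ``$\Aut(\M/C)$-orbit closure'' would not be compact, but your alternative idea (use the orbit of $a$ over the $B$-parameters) is exactly what the paper does.
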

\begin{proof}
Suppose $A\ninda_C B$. Then there is some $a\in (\acl(AC)\cap\acl(BC))\backslash \acl(C)$. We will show $a\nindd_C B$, which implies $A\nindd_C B$ by Fact \ref{fact:kdiv}.

Let $\bbar$ enumerate $B$, and let $\abar$ enumerate the $\Aut(\M/\bbar C)$-orbit of $a$. Then $\abar$ is compact by Fact \ref{fact:acldef}. Since $a\not\in\acl(C)$, we have $\abar\cap\acl(C)=\emptyset$ and thus no element in $\abar$ has a totally bounded $\Aut(\M/C)$-orbit by Fact \ref{fact:acldef}.  By Lemma \ref{lem:PMc} we can construct a sequence $(\abar_i)_{i<\omega}$ and some $\epsilon>0$ such that:
\begin{enumerate}[$(i)$]
\item $\abar_i\equiv_C\abar$ for all $i<\omega$, and
\item if $i<j<\omega$ then $d(a,a')\geq\epsilon$ for all  $a\in\abar_i$ and $a'\in \abar_j$.
\end{enumerate}
For $i<\omega$, choose $\bbar_i$ such that $\abar_i\bbar_i\equiv_C \abar\bbar$. After replacing $(\abar_i\bbar_i)_{i<\omega}$ by a realization of its EM-type over $C$, we may assume the sequence is $C$-indiscernible, while preserving properties $(i)$ and $(ii)$.  We show that $(\bbar_i)_{i<\omega}$ witnesses $a\nindd_C B$. 

Toward a contradiction, suppose there is some $a^*$ such that $a^*\bbar_i\equiv_C a\bbar$ for all $i<\omega$. Since $a\in \abar$ and $\abar\bbar\equiv_C\abar_i\bbar_i$, we may choose some $a_i\in\abar_i$ such that $a^*\bbar_i\equiv_C a_i\bbar_i$. So  for all $i<\omega$, $a^*$ is in the $\Aut(\M/\bbar_iC)$-orbit of $a_i$, which is precisely $\abar_i$ since $\abar_i\bbar_i\equiv_C\abar\bbar$. Therefore $a^*\in\abar_i$ for all $i<\omega$, contradicting $(ii)$.
\end{proof}

Note that  the previous proof differs slightly from the discrete version in Proposition \ref{prop:dadisc}, which identifies a specific dividing formula. Using the definability of $\abar$ over $BC$, one could  write a more analogous proof that explicitly constructs a \emph{dividing condition} in $\tp(a/BC)$. Specifically, there is an $\cL_C$-formula $\varphi(x,\ybar)$ such that $\varphi(a,\bbar)=0$ and, for some $\delta>0$, $\{\varphi(x,\bbar_i)\leq\delta:i<\omega\}$ is $2$-inconsistent.

 \begin{remark}\label{rem:dtoa}
We briefly sketch another proof of Theorem \ref{thm:dtoa} using full existence for $\inda$ and Erd\H{o}s-Rado in place of Lemma \ref{lem:PMc}.\footnote{This is inspired by a similar argument in discrete logic due to Kruckman \cite{KrMSE2}.}
 
 Assume $A\indd_C B$, and let $\abar$ and $\bbar$ enumerate $A$ and $B$. By full existence for $\inda$ and Erd\H{o}s-Rado, one can construct a $C$-indiscernible sequence $(\bbar_i)_{i<\omega}$ such that $\bbar_0=\bbar$ and $\bbar_i\inda_C\bbar_{<i}$ for all $i<\omega$. By Fact \ref{fact:kdiv} there is an $\acl(\abar C)$-indiscernible sequence $(\bbar'_i)_{i<\omega}$ such that $(\bbar'_i)_{i<\omega}\equiv_{\bbar_0 C}(\bbar_i)_{i<\omega}$. Therefore
 \begin{align*}
 \acl(\abar C)\cap \acl(\bbar C) &= \acl(\abar C)\cap \acl(\bbar'_0 C)&\text{ (since $\bbar'_0=\bbar_0=\bbar$)}\\
 &\seq \acl(\bbar'_1 C)\cap\acl(\bbar'_0C) &\text{ (since $\bbar'_0\equiv_{\acl(\abar C)}\bbar'_1$)}\\
 &=\acl(C) &\text{ (since $\bbar'_0\bbar'_1\equiv_C\bbar_0\bbar_1$ and $\textstyle \bbar_1\inda_C \bbar_0$).}
 \end{align*} 
 So $A\inda_C B$, as desired.
 \end{remark}
 
 A natural question is what Theorem \ref{thm:dtoa} says about the discussion of hyperimaginaries in Section \ref{sec:hyp}. So we note the following conclusion, which appears to be new even in the case that $T$ is discrete.

\begin{corollary}\label{cor:dtob}
In any discrete or continuous theory $T$, $\indd$ implies $\indb$.
\end{corollary}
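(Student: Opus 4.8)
The plan is to reduce Corollary \ref{cor:dtob} to Theorem \ref{thm:dtoa} via exactly the same ``eliminate hyperimaginaries in the continuous sense'' mechanism that was used to deduce full existence for $\indb$ in Corollary \ref{cor:indb}. Recall from Fact \ref{fact:acldef} that in the expansion $T^*$ (whose monster model expands $\M^{\omega\heq}$) we have $\bdd^\omega(A)=\acl^*(A)$ for every $A\subset\M^{\omega\heq}$, so $\indb$ restricted to $\M^{\omega\heq}$ is literally $\inda$ computed in $T^*$. By Fact \ref{fact:bom}$(c)$ and the fact that every hyperimaginary is interdefinable with a tuple from $\M^{\omega\heq}$, it suffices to prove that $\indd$ implies $\inda^*$ for small subsets of the monster model of $T^*$ — which is exactly the content of Theorem \ref{thm:dtoa} applied to $T^*$. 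So the only real work is to check that $\indd$ (dividing independence) in $\M^{\heq}$ transfers correctly to $\indd$ in $T^*$.

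Concretely, I would argue as follows. Fix $A,B,C\subset\M^{\heq}$ with $A\indd_C B$. As in the proof of Corollary \ref{cor:omegadown}, replace $A$, $B$, $C$ by $\bdd^\omega(A)$, $\bdd^\omega(B)$, $\bdd^\omega(C)$; using Fact \ref{fact:bom}$(a)$ and the fact that $\bdd^\omega$-closures of small sets are small, this does not change any of the relevant $\indb$-statements (indeed $\bdd(AC)=\dcl^{\heq}(\bdd^\omega(AC))$), and it does not destroy $\indd$ either, since dividing independence is preserved under passing to interdefinable sets and to algebraic (hence bounded) closures — this is the analogue of Fact \ref{fact:kdiv} for hyperimaginaries and is routine. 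Now everything lives in $\M^{\omega\heq}$, which is (the reduct of) the monster model of $T^*$. The key observation is that the notion of ``$C$-indiscernible sequence'' and ``$\equiv_C$'' for tuples from $\M^{\omega\heq}$ is the same whether computed in the hyperimaginary sorts of $\M$ or in the continuous sorts of $T^*$: a countably-type-definable equivalence class is determined, via $\rho_E$, by the value of a definable pseudometric, so $a E b$ iff $d_E(a/E,b/E)=0$, and continuous-logic indiscernibility/type-equality in the $\rho_E$-sort matches $\M$-indiscernibility/type-equality of representatives up to $E$. Hence $A\indd_C B$ computed in $\M$ is the same statement as $A\indd_C B$ computed in $T^*$.

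Given that translation, Theorem \ref{thm:dtoa} applied to the continuous theory $T^*$ yields $A\inda^* _C B$, i.e.\ $\acl^*(AC)\cap\acl^*(BC)=\acl^*(C)$, which by $\bdd^\omega=\acl^*$ reads $\bdd^\omega(AC)\cap\bdd^\omega(BC)=\bdd^\omega(C)$, and this is $A\indb_C B$ by Fact \ref{fact:bom}$(c)$. Unwinding the reduction to $\bdd^\omega$-closed sets via Fact \ref{fact:bom}$(a)$,$(b)$ exactly as in Corollary \ref{cor:omegadown} gives $A\indb_C B$ for the original $A,B,C$.

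The main obstacle I anticipate is not any single hard step but rather the bookkeeping of the translation: one must be careful that dividing independence in $\M^{\heq}$ is genuinely the same relation as dividing independence in (the continuous theory) $T^*$ — i.e.\ that the automorphism group, the notion of indiscernible sequence, and type-equality over parameters all match up under the interpretation $\M^{\omega\heq}\leadsto \M^{T^*}$. This is where the work of Section \ref{sec:hyp} (Fact \ref{fact:countableEQ}, Proposition \ref{prop:pseudo-equiv}, and the construction of $T^*$) does the heavy lifting; once it is granted, as it is by the time we reach this corollary, the deduction is essentially a one-line invocation of Theorem \ref{thm:dtoa} for $T^*$ combined with Corollary \ref{cor:omegadown}-style unwinding. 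An alternative, entirely parallel to the ``counterpoint'' remark after Corollary \ref{cor:indb}, is to avoid introducing $T^*$ and instead run the proof of Theorem \ref{thm:dtoa} directly on the abstract metric space $\M^{\omega\heq}$ with metric given by Proposition \ref{prop:pseudo-equiv}, using only that $a\in\bdd^\omega(C)$ iff the $\Aut(\M/C)$-orbit of $a$ is totally bounded in that metric; I would mention this as the alternative route but present the $T^*$-reduction as the main proof.
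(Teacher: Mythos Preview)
Your proposal is correct and follows essentially the same route as the paper: pass from $\M^{\heq}$ to $\M^{\omega\heq}$ viewed as the monster model of $T^*$, invoke Theorem \ref{thm:dtoa} there, and translate back using $\bdd^\omega=\acl^*$ and Fact \ref{fact:bom}. The only cosmetic difference is that the paper replaces $A,B,C$ by $\baseunderline{AC},\baseunderline{BC},\baseunderline{C}$ where $\baseunderline{X}\coloneqq\dcl^{\heq}(X)\cap\M^{\omega\heq}$, whereas you use $\bdd^\omega$-closures; both choices land in $\M^{\omega\heq}$ and preserve the relevant $\indd$- and $\indb$-statements, and the paper likewise labels the translation equivalences as ``routine'' and notes the alternative direct-metric argument you mention.
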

 \begin{proof}
 Let $T^*$ be as  in Section \ref{sec:hyp}. 
 Given $A\seq\M^{\heq}$, define $\baseunderline{A} \coloneqq \dcl^{\heq}(A)\cap\M^{\omega\heq}$. Then, given $A,B,C\subset\M^{\heq}$, we have 
\begin{align*}
 \textstyle A\indd_C B\text{ (in $\M^{\heq}$)}&\miff \textstyle \baseunderline{AC}\indd_{\baseunderline{C}} \baseunderline{BC} \text{ (in $T^*$)}\\
 &\mimp \textstyle \baseunderline{AC}\inda_{\baseunderline{C}}\baseunderline{BC} \text{ (in $T^*$)}
\miff  A\indb_C B \text{ (in $\M^{\heq}$)}.
\end{align*}
 The middle implication  follows from Theorem \ref{thm:dtoa}. The proof of the remaining equivalences are all routine in light of the various  facts established above.
 \end{proof}
 
The previous corollary is known in the special case that $T$ is a (discrete) \emph{simple} theory (see \cite[Proposition 16.19]{Casanovas}).
We also point out that the alternate proof of Theorem \ref{thm:dtoa} in Remark \ref{rem:dtoa} adapts directly to $\M^{\heq}$ to give an alternate proof of Corollary \ref{cor:dtob} using full existence for $\indb$.

\subsection*{Acknowledgements}
 We  thank Alex Kruckman for helpful discussions. The first author was partially supported by NSF grant DMS-1855503.


\end{document}